\newcommand{\n}{\noindent}
\newcommand{\vp}{\varepsilon}
\newcommand{\ovl}{\overline}
\newcommand{\bb}[1]{\mathbb{#1}}
\newcommand{\wt}{\widetilde}
\numberwithin{equation}{section}
\theoremstyle{plain}
\newtheorem{thm}{Theorem}[section]
\newtheorem{lem}[thm]{Lemma}
\newtheorem{cor}[thm]{Corollary}
\theoremstyle{definition}
\newtheorem*{rk}{Remark}
\begin{document}
 
\thispagestyle{empty}

\title{The Expected Time to End the Tug-of-War in a Wedge}

\author{
Dante DeBlassie\\
Robert G. Smits\\
Department of Mathematical Sciences\\
New Mexico State University\\
P. O. Box 30001 \\
Department 3MB\\
Las Cruces, NM \ 88003-8001\\
deblass@nmsu.edu\\
rsmits@nmsu.edu}

\date{}
\maketitle

\begin{quote}
Using a solution of a nonhomogeneous partial differential equation involving the $p$-Laplacian, we study the finiteness of the expected time to end the tug-of-war in a wedge.
\end{quote}

\bigskip 

\n \emph{2000 Mathematical Subject Classification}. Primary 60G40, 60K99, 91A15, 35J92. Secondary 91A24, 60G42, 35B65, 34A34.
\bigskip 

\n \emph{Key Words and Phrases}. Tug-of-war, wedge, $p$-harmonic functions, inhomogeneous game $p$-Laplacian; expected time to end the game, critical angle.

\bigskip
\n \emph{Running Title: }Expected Time to End the Tug-of-War. 
\newpage

\section{Introduction}\label{sec1}

\indent 

For $d\geq 2$, let $D\subseteq\bb{R}^d$ be a domain. Consider the two-player, zero-sum game in $D$ known as the \emph{tug-of-war with noise}. Here is a rough description---we will be precise below: Suppose $\vp>0$ and $p>1$. The game starts at a position $x\in D$. A fair coin is tossed at each stage of the game and the winner picks $v\in\bb{R}^d$ with $|v|\leq\vp$ to add to the game position. Then a random noise vector with mean $0$ and variance $\frac pq\vp^2$ (where $q$ is the conjugate of $p$) in each orthogonal direction is added to the game position. The game terminates when the position reaches a point on the boundary of $D$.

We have in mind that player I seeks to prolong the game as long as possible while player II wants to end the game as soon as possible. The goal of this article is to see how the geometry of the domain affects the expected time to end the game. In particular, we study the special case of a wedge in two dimensions.

Denote the wedge in $\bb{R}^2$ with angle $\eta\in(0,2\pi)$ by

\[
W_\eta=\left\{(r,\theta):r>0,\quad-\frac\eta 2<\theta<\frac\eta 2\right\},
\]
where $r$ and $\theta$ are the usual polar coordinates. In the sequel, we use $E_x$ to denote expectation associated with the game starting at the position $x$.

\begin{thm}\label{thm1.1}
If
\[
\eta<\pi\left[1-\frac 12\sqrt{\frac{2(p-1)}p}\ \right],
\]
then there is $\vp_0>0$ along with a strategy for player II such that the time $\tau$ to end the game in $W_\eta$ satisfies 
\[
\sup_{S_I}\sup_{0<\vp<\vp_0}\vp^2E_{x_0}[\tau]<\infty,\quad x_0\in W_\eta,
\]
where the first supremum is taken over all strategies for player I.
\end{thm}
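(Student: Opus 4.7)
The plan is a supersolution / Lyapunov-function argument. I will construct $u\in C^2(W_\eta)\cap C(\overline{W_\eta})$ with $u\ge 0$ in the wedge and $u = 0$ on $\partial W_\eta\setminus\{0\}$, satisfying the one-step dynamic-programming supersolution inequality
\[
\tfrac12\sup_{|v|\le\vp}\mathbb{E}[u(x+v+N)] + \tfrac12\inf_{|v|\le\vp}\mathbb{E}[u(x+v+N)] \;\le\; u(x) - \vp^2 \qquad(x\in W_\eta),
\]
valid for all $\vp\in(0,\vp_0)$, where $N$ is the noise described in the introduction. Taking for player II the strategy $S_{II}^{\ast}$ that chooses a minimizer of the inf on her turns, the pair $(S_I,S_{II}^{\ast})$ makes $u(X_{n\wedge\tau})+\vp^2(n\wedge\tau)$ a nonnegative supermartingale for every strategy $S_I$ of player I, and optional stopping delivers $\vp^2 E_{x_0}[\tau]\le u(x_0)$ uniformly in $S_I$ and in $\vp$.

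The candidate $u$ is a solution of the inhomogeneous (normalized) game $p$-Laplace equation
\[
\mathcal{L}_p u := (p-1)\Delta u + (2-p)\Delta_\infty^{N} u = -C \qquad\text{in } W_\eta,\qquad u = 0 \text{ on } \partial W_\eta,
\]
since a Taylor expansion of the sup/inf/noise average shows it equals $u+\tfrac{\vp^2}{2}\mathcal{L}_p u+o(\vp^2)$ whenever $\nabla u\neq 0$, so choosing $C\ge 2$ makes $u$ a DPP supersolution up to this error. Because $\mathcal{L}_p$ is $2$-homogeneous in $D^2 u$, I use the separation-of-variables ansatz $u(r,\theta)=r^2 f(\theta)$; computing in polar coordinates this reduces the PDE to a nonlinear second-order ODE for the angular profile $f$ on $(-\eta/2,\eta/2)$ with $f(\pm\eta/2)=0$ and $f'(0)=0$. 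The existence of a positive symmetric solution of this ODE is what forces the hypothesis on $\eta$: a phase-plane / eigenvalue analysis, calibrated against the family of homogeneous separated $p$-harmonic profiles $r^{\alpha(\eta)}\phi_\eta(\theta)$ of $\mathcal{L}_p$ and requiring Fredholm-type solvability at exponent $\alpha=2$ (namely $\alpha(\eta)>2$), identifies the critical angle as $\pi\bigl[1-\tfrac12\sqrt{2(p-1)/p}\bigr]$. As a sanity check, for $p=2$ the operator $\mathcal{L}_p$ reduces to the Laplacian, the ODE becomes the elementary $f''+4f=-C$, and the critical angle collapses to $\pi/2$, the classical Brownian-motion threshold for finite expected exit time from a wedge.

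Converting the PDE back into the discrete DPP inequality is routine Taylor expansion on any set $\{r\ge\delta\}\cap W_\eta$, where $D^2u$, $D^3u$, and $1/|\nabla u|$ are bounded and the $o(\vp^2)$ error has a uniform modulus depending only on $\delta$. The main obstacle, which I expect to be the technically delicate step, is the tip $r=0$: there $u=r^2 f$ degenerates, $\nabla u$ vanishes, and player I can try to drag the game into this region to spoil the supermartingale inequality. I would repair this either by replacing $u$ with $\tilde u := u + B$ for a suitable constant $B>0$ (since $\mathcal{L}_p$ annihilates constants the PDE is preserved, while $\tilde u\ge B$ affords enough $L^\infty$ room to verify the DPP inequality directly near $r=0$ without appealing to Taylor expansion), or by a self-similar rescaling at the vertex bounding the expected number of steps spent in each dyadic shell $\{2^{-k-1}\le r\le 2^{-k}\}$ by $O(4^{-k}/\vp^2)$ and then summing over $k$. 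Either repair, combined with the interior Taylor estimate and the first-paragraph supermartingale argument, yields the claimed uniform finiteness of $\vp^2 E_{x_0}[\tau]$.
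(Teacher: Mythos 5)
Your overall architecture---a separated supersolution $u=r^2f(\theta)$ of an inhomogeneous game $p$-Laplace equation, player II pulling along $-\nabla u$, the process $u(x_k)+c\vp^2 k$ a supermartingale, optional stopping---is exactly the paper's strategy (Sections 2--3). But there are two genuine gaps. First, the existence of the positive angular profile $f$ for every $\eta<\pi\bigl[1-\tfrac 12\sqrt{2(p-1)/p}\,\bigr]$ is the hard part of the paper (all of Section 5, i.e.\ Theorem \ref{thm1.2}), and your ``phase-plane / Fredholm-type solvability at exponent $\alpha=2$'' argument is only a heuristic for locating the threshold: the angular ODE \eqref{eq5.1} is genuinely nonlinear (quadratic in $f'$ and in $f''$), so linear eigenvalue/Fredholm reasoning does not apply, and calibration against Aronsson's homogeneous $p$-harmonic profiles explains why the constant is what it is without producing a solution. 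The paper instead substitutes $H_a(y)=(f')^2$, runs a shooting argument in the parameter $a=f(0)$, proves monotonicity and continuity of the aperture $\theta_a$ in $a$, and computes $\lim_{a\to\infty}\theta_a$ by a residue calculation; some such existence proof must be supplied. (Minor but worth checking: your operator $(p-1)\Delta u+(2-p)\Delta_\infty^N u$ has the wrong sign on the infinity-Laplacian part relative to the game $p$-Laplacian $\tfrac 1p\Delta+\tfrac{p-2}{p}\Delta_\infty$ used in \eqref{eq2.1}; with your sign the critical angle would come out differently.)

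Second, your primary repair at the tip does not work. Replacing $u$ by $\tilde u=u+B$ changes nothing: the one-step inequality $\tfrac 12\sup+\tfrac 12\inf\le u-\vp^2$ is invariant under adding a constant to $u$ (the constant appears on both sides and cancels), and the actual obstruction near $r=0$ is not a lack of $L^\infty$ room but the fact that $|\nabla u|=r\sqrt{4f^2+(f')^2}\to 0$, which makes the cubic error term $\vp^3/|\nabla u|$ in the expansion (cf.\ Lemma \ref{lem2.1}(ii)) unbounded on a fixed-size neighborhood of the vertex---a region far larger than the $O(\vp)$ boundary layer where the game terminates. Your dyadic-shell alternative is not developed enough to assess (the game can revisit shells, so the claimed $O(4^{-k}/\vp^2)$ occupation bounds need an argument). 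The paper's fix is different and clean: take the profile for a strictly larger wedge $W_{\eta_1}$ and translate $W_\eta$ by $2(\alpha+1)$ away from the vertex of $W_{\eta_1}$, so that on the translated wedge $|\nabla u|$ is bounded below and $D^2u$, $D^3u$ are bounded above (\eqref{eq3.2}--\eqref{eq3.4}), after which the Taylor/supermartingale argument goes through uniformly. You should adopt this (or an equivalent) device; as written, your proof fails precisely at the step you yourself flag as delicate.
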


Suppose the state space of the game is a bounded set $D\subseteq\bb{R}^d$ and $F:\partial D\to\bb{R}$ is continuous. Peres and Sheffield (2008) have studied the tug-of-war in $D$, where the game is run so that when it ends at position $y\in\partial D$, player I receives a payoff of $F(y)$ from player II. Here $F$ can take on positive or negative values, and so a negative payoff corresponds to player I paying player II. An important idea in that article is a connection between the game and the game $p$-Laplacian, which is the operator defined by
\[
\Delta_p\,u:=\Delta_p^Gu=\frac 1p\,|\nabla u|^{2-p}\, \text{div}(|\nabla u|^{p-2}\,\nabla u).
\]
One of their principal results is that if $D$ is bounded and sufficiently regular, then as $\vp\to 0$, the expected payoff for player I converges to the unique $p$-harmonic extension of $F$ to all of $D$. One can regard this result as an analogue of Kakutani's (1944) classical theorem that if $B_t$ is Brownian motion in $\bb{R}^d$ and $\tau$ is its exit time from $D$, then $E_x[F(B_\tau)]$ is the unique harmonic extension of $F$ to $D$.

Another very interesting result of Peres and Sheffield is the following. Suppose $u(x)$ is sufficiently regular and satisfies $\Delta_p\,u=-g$ in $D$, where $g$ is bounded below by a positive constant. Modify the tug-of-war so that player I receives a running payoff of $\vp^2f(x_k)$ when the game position at the $k^\text{th}$ step is $x_k$. Here, $f$ is proportional to $g$ and the constant of proportionality depends only on $p$ and the underlying noise. Then as $\vp\to 0$, the expected payoff for player I converges to $u$. This particular connection is why one uses the game $p$-Laplacian rather than the usual variational $p$-Laplacian given by
\[
\Delta_p^Vu=\text{div}\left(|\nabla u|^{p-2}\,\nabla u\right).
\]
Note that when $g\equiv 1$ and the boundary payoff $F$ is zero, if $\tau$ is the time to end the game, then the expected payoff is proportional to $\vp^2E_x[\tau]$. Thus as $\vp\to 0$, the limiting value of $\vp^2E_x[\tau]$ is proportional $u$.

This observation and the following analogue for Brownian motion are what motivated our work. For the exit time $\tau_D$ of $d$-dimensional Brownian motion from a Greenian domain $D$, it is well-known that if $G(x,y)$ is Green's function for half the Dirichlet Laplacian on $D$, then
\[
E_x[\tau_D]=\int_DG(x,y)\,dx
\]
(see Hunt (1956), page 309). Moreover, when $D$ is bounded, under certain mild conditions on $\partial D$ (see Dynkin and Yushkevich (1969), page 68), the function
\[
u(x)=E_x[\tau_D]
\]
is the unique solution of the boundary value problem
\begin{align}\label{eq1.1}
\tfrac 12\Delta u&=-1\quad\text{in $D$}\notag\\
u&=0\quad\text{on $\partial D$.}
\end{align}
Thus if one can solve \eqref{eq1.1}, then it follows that
\begin{equation}\label{eq1.2}
E_x[\tau_D]<\infty.
\end{equation}
When $D$ is unbounded, the situation is more delicate, but it is not hard to show that if a nonnegative solution to \eqref{eq1.1} exists, then by looking at bounded subdomains and using the maximum principle, \eqref{eq1.2} holds. Thus the connection between the $p$-Laplacian and the tug-of-war suggests that study of the equation $\Delta_pu=-1$ might yield information on the expected time to end the game. In fact, in order to prove Theorem \ref{thm1.1}, we will make use of the following theorem.

\begin{thm}\label{thm1.2}
Let $p\in(1,\infty)$. If
\[
\eta<\pi\left[1-\frac 12\sqrt{\frac{2(p-1)}p}\ \right],
\]
then the boundary value problem
\begin{align*}
\Delta_p\,u&=-1\quad\text{in }W_\eta\\
u&=0\quad\text{on }\partial W_\eta
\end{align*}
has a nonnegative solution $u\in C^3(\ovl{W_\eta}\backslash\{0\})\cap C(\ovl{W_\eta})$ of the form $u(x)=r^2f(\theta)$. Moreover, $u$ is positive on $W_\eta$ and $|\nabla u|\neq 0$ on $\ovl{W_\eta}\backslash\{0\}$.
\end{thm}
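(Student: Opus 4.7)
The plan is to reduce the PDE to an ODE for the angular profile $f$, produce $f$ by a one-parameter shooting argument parametrized by $A = f(0)$, and identify the critical angle $\eta_c := \pi[1 - \tfrac12\sqrt{2(p-1)/p}\,]$ as the first zero of the associated homogeneous (game $p$-harmonic) solution with the same ansatz.

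For the ODE reduction, I would use the identity $\Delta_p u = \tfrac{1}{p}\Delta u + \tfrac{p-2}{p}\Delta_\infty^N u$, where $\Delta_\infty^N u = \langle D^2 u\,\nabla u,\nabla u\rangle/|\nabla u|^2$, and plug $u=r^2 f(\theta)$ into it in polar coordinates; the $r$-dependence cancels completely and $\Delta_p u = -1$ reduces to the autonomous ODE
\begin{equation*}
f''\bigl[4f^2 + (p-1)(f')^2\bigr] + 8pf^3 + (6p-8)f(f')^2 + 4pf^2 + p(f')^2 = 0, \tag{$\ast$}
\end{equation*}
with $f(\pm\eta/2) = 0$. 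By the symmetry of the BVP, I look for an even solution determined by initial data $f(0) = A > 0$, $f'(0) = 0$. Since $4f^2 + (p-1)(f')^2 > 0$ whenever $(f,f')\ne(0,0)$, the ODE can be solved smoothly for $f''$, giving local existence/uniqueness; $f''(0) = -p(2A+1) < 0$, so $f$ starts concave down. Let $\theta_A \in (0,\infty]$ be its first zero; multiplying $(\ast)$ by $f'$ gives an energy-type identity from which one reads $f'(\theta_A) \ne 0$, hence $\theta_A < \infty$ and $A \mapsto \theta_A$ is continuous. For small $A$, $(\ast)$ linearizes as $f'' \approx -p$, whence $f \approx A - p\theta^2/2$ and $\theta_A \to 0$ as $A \to 0^+$.

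The substantial step is evaluating $\lim_{A\to\infty}\theta_A$. Rescaling $f = Ag$ turns $(\ast)$ into the homogeneous ODE (the one for $\Delta_p(r^2 g) = 0$) plus an $O(1/A)$ perturbation, so by continuous dependence $\theta_A \to \theta^*$, where $\theta^*$ is the first zero of the homogeneous solution with $g(0) = 1,\ g'(0) = 0$. The key observation is that this homogeneous ODE admits an explicit first integral: writing $w = (g')^2$ as a function of $g$ and then substituting $v = w/g^2$, the equation collapses to the separable form
\[
g\,v'\bigl[4 + (p-1)v\bigr] = -2(v+4)\bigl((p-1)v + 2p\bigr),
\]
the quadratic in $v$ factoring because its discriminant is the perfect square $4(p-2)^2$. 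Integrating by partial fractions with $g(0) = 1$, $g'(0) = 0$ yields
\[
[(g')^2 + 4g^2]^2 = \tfrac{8(p-1)}{p}(g')^2 + 16g^2.
\]
Solving for $(g')^2 = s(g)$ on the branch with $s(1) = 0$ and writing $\theta^* = \int_0^1 dg/\sqrt{s(g)}$, the substitutions $\mu = \sqrt{p(2-p)g^2 + (p-1)^2}$ followed by $\mu = (p-1) + (2-p)t$ reduce the integral to $\int_0^1 dt/\bigl[(2(p-1) + (2-p)t)\sqrt{t(1-t)}\bigr]$, a classical form equal to $\pi/\sqrt{2p(p-1)}$. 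Algebraic simplification then gives $\theta^* = \pi/2 - (\pi/4)\sqrt{2(p-1)/p} = \eta_c/2$. This explicit computation of the critical angle is the main obstacle in the proof; the hidden separability revealed by $v = w/g^2$ is what makes it tractable.

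Combining continuity of $A \mapsto \theta_A$ with the limits $\theta_A \to 0^+$ and $\theta_A \to \eta_c/2$, the intermediate value theorem produces, for any $\eta < \eta_c$, some $A > 0$ with $\theta_A = \eta/2$. The associated $f$ solves the BVP and is positive on $(-\eta/2,\eta/2)$, so $u = r^2 f$ is positive on $W_\eta$. Since $f' \ne 0$ at the endpoints, $4f^2 + (f')^2 > 0$ throughout $[-\eta/2,\eta/2]$, so $|\nabla u| = r\sqrt{4f^2 + (f')^2} \ne 0$ on $\ovl{W_\eta}\setminus\{0\}$. Smoothness $u \in C^3(\ovl{W_\eta}\setminus\{0\})$ follows from the smooth solvability of $(\ast)$ for $f''$, while $u \in C(\ovl{W_\eta})$ follows from $|u| \le r^2\|f\|_\infty \to 0$ at the origin.
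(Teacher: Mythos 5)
Your proposal is correct and follows essentially the same route as the paper: reduce to the angular ODE, shoot on $a=f(0)$, show $\theta_a\to 0$ as $a\to 0^+$ and $\theta_a\to\frac{\pi}{2}\bigl[1-\frac 12\sqrt{2(p-1)/p}\,\bigr]$ as $a\to\infty$, and apply the intermediate value theorem; your variable $v=(g')^2/g^2$ is exactly the paper's $G_a$ (with limit $K$), and your separable equation and partial-fraction evaluation of $\int_0^1 du/(u\sqrt{K(u)})$ are the same computation the paper carries out by residues. The only caveat is that the steps you dispatch with ``an energy-type identity'' and ``continuous dependence'' (finiteness and continuity of $a\mapsto\theta_a$, and interchanging $a\to\infty$ with the first-zero functional) are precisely where the paper invests most of its effort (Lemmas 5.1--5.8, via monotonicity of $G_a$ in $a$, a Gronwall argument, and the integrability of $y^{-1}K(y)^{-1/2}$), though your claims there are correct and can be made rigorous.
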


\begin{rk}
When $p=1$, we can get an implicit solution of the corresponding boundary value problem in terms of elementary functions.
\end{rk}
Although the purpose of this theorem is to prove Theorem \ref{thm1.1}, we feel it is of independent interest because it concerns a nonhomogeneous boundary value problem involving the $p$-Laplacian in an unbounded set.

Several authors have studied nonhomogeneous partial differential equations involving the $p$-Laplacian in unbounded domains. For instance:
\begin{itemize}
\item Liouville-type theorems and related results were obtained in the articles by Liskevich et al. (2007), Bidaut-V\'eron (1989) and Abdellaoui and Peral (2003).
\item Eigenvalue problems on $\bb{R}^n$ were studied by Brown and Reichel (2004) for $L^q(\bb{R}^n)$ eigenfunctions, while both Fleckinger et al. (1997) and Dr\'abek (1995) considered positive eigenfunctions that decay to $0$ at infinity.
\item Eigenvalue problems on smooth unbounded domains with nonhomogeneous boundary conditions and eigenfunctions in weighted Sobolev spaces were the  subject of the articles by Montefusco and R\u{a}dulescu (2001) and Pfl\"{u}ger (1998). Fleckinger et al. (1999) looked at the principal eigenvalue with $L^q$ principal eigenfunction for Dirichlet boundary conditions.
\item In exterior domains, Yu (1992) considered decaying solutions of nonhomogeneous equations with Dirichlet boundary conditions.
\item Krist\'aly (2004) considered nonhomogeneous systems involving the p-Laplacian in unbounded strips with Dirichlet boundary conditions.
\end{itemize}
In our result, we consider Dirichlet boundary conditions and explosion at infinity.

Our proof of Theorem \ref{thm1.2} shows that if $\eta$ is such that the boundary value problem
\begin{align}\label{eq1.3}
\Delta_p\,u&=-1\quad\text{in $W_\eta$}\notag\\
u&=0\quad\text{on $\partial W_\eta$,}
\end{align}
has a solution in $C^3(W_\eta)\cap C(\ovl{W_\eta})$ of the form $u(x)=r^2f(\theta)$, then necessarily
\[
\eta<\pi\left[1-\frac 12\sqrt{\frac{2(p-1)}p}\ \right].
\]
But neither this nor Theorem \ref{thm1.2} is helpful in determining whether or not $E_x[\tau_D]=\infty$. 

With the aid of Theorem \ref{thm1.1}, we can prove the following result.

\begin{thm}\label{thm1.3}
There is a critical angle $\eta_p$ with the following properties:

\n i) If $\eta<\eta_p$, then there exists a strategy for player II such that if $\tau$ is the time to end the game in $W_\eta$, then
\[
E_{x_0}[\tau]<\infty,\quad x_0\in W_\eta,
\]
regardless of the strategy used by player I.

\n ii) If $\eta>\eta_p$, then for each strategy of player II, there is a strategy for player I such that
\[
E_{x_0}[\tau]=\infty,\quad x_0\in W_\eta.
\]
\end{thm}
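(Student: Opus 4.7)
The idea is to define $\eta_p$ as the supremum of angles for which part (i) holds, and then derive both conclusions from a monotonicity property combined with Theorem \ref{thm1.1}. Specifically, I would set
\[
\eta_p := \sup\bigl\{\eta \in (0,2\pi) : \text{part (i) of Theorem \ref{thm1.3} holds for } W_\eta\bigr\}.
\]
Theorem \ref{thm1.1} guarantees that, for $\vp$ sufficiently small, this set contains every $\eta < \pi\bigl[1 - \tfrac{1}{2}\sqrt{2(p-1)/p}\bigr]$, so $\eta_p > 0$.

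The heart of the argument is the monotonicity claim: if part (i) holds in $W_\eta$, then it holds in $W_{\eta'}$ for every $\eta' \in (0,\eta)$. To prove this, fix $\eta' < \eta$ and let $S_{II}$ be a player II strategy in $W_\eta$ witnessing (i). Since $W_{\eta'} \subset W_\eta$, the same map from past-game histories to moves in $B_\vp(0)$ is a valid player II strategy in $W_{\eta'}$. Given an arbitrary player I strategy $S_I'$ in $W_{\eta'}$, extend it to a player I strategy $S_I$ in $W_\eta$ by defining moves arbitrarily on histories whose trajectory has already exited $W_{\eta'}$. Couple the two games on a common probability space using identical coin tosses and noise vectors. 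Inductively, the two trajectories coincide up to the exit time $\tau'$ from $W_{\eta'}$, and the exit time $\tau$ from $W_\eta$ therefore satisfies $\tau \geq \tau'$ almost surely. For $x_0 \in W_{\eta'}$ this yields
\[
E_{x_0}[\tau'] \leq E_{x_0}[\tau] < \infty,
\]
which establishes (i) for $W_{\eta'}$.

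With monotonicity in hand, both parts of Theorem \ref{thm1.3} follow quickly. For part (i), given $\eta < \eta_p$, choose $\eta'' \in (\eta,\eta_p)$ belonging to the defining set and use monotonicity to pass (i) down to $W_\eta$. For part (ii), observe that its statement is precisely the logical negation of (i): the quantifier structure $\exists S_{II}\,\forall S_I\, E_{x_0}[\tau]<\infty$ negates to $\forall S_{II}\,\exists S_I\, E_{x_0}[\tau]=\infty$. If $\eta>\eta_p$ then (i) must fail (otherwise $\eta_p \geq \eta$ by definition of the supremum), so (ii) holds. The main obstacle is not mathematical depth but careful formalization of the coupling step: one must verify that restricting $S_{II}$ and extending $S_I'$ produce admissible strategies in the formal game setup, and that $\tau$ and $\tau'$ are realized as random variables on a common sample space with matching past moves as long as the trajectory remains in $W_{\eta'}$. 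Once this bookkeeping is in place, the rest is a straightforward exercise in suprema and logical negations.
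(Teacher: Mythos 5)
Your proposal is correct and follows essentially the same route as the paper: the paper defines $\eta_p=\sup\mathcal{A}$, where $\mathcal{A}$ is the set of angles for which part (i) holds, invokes Theorem \ref{thm1.1} to conclude $\mathcal{A}\neq\emptyset$, and stops there. The downward-monotonicity/coupling step you spell out is precisely what the paper leaves implicit (and it is needed for (i) to follow from the supremum definition); your only minor imprecision is that the two coupled trajectories coincide only until the smaller wedge's boundary rule ($\mathrm{dist}(x_k,\partial W_{\eta'})\leq\alpha\vp$) triggers, not literally up to $\tau'$, but the desired inequality $\tau\geq\tau'$ still follows.
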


An immediate corollary of Theorem \ref{thm1.1} is a lower bound on $\eta_p$:

\begin{cor}\label{cor1.4} 
The critical angle $\eta_p$ from Theorem \ref{thm1.3} satisfies
\[
\eta_p\geq\pi\left[1-\frac 12\sqrt{\frac{2(p-1)}p}\ \right].
\]
\end{cor}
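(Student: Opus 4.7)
The plan is to derive the corollary as a direct consequence of Theorem \ref{thm1.1} combined with the dichotomy in Theorem \ref{thm1.3}. The logical skeleton is: any angle for which player II has a strategy forcing finite expected exit time cannot lie above the critical angle $\eta_p$, so every angle satisfying the hypothesis of Theorem \ref{thm1.1} must be at most $\eta_p$, and the desired lower bound emerges upon taking a supremum.

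First I would fix an arbitrary $\eta<\pi[1-\tfrac12\sqrt{2(p-1)/p}\,]$. Theorem \ref{thm1.1} then produces $\vp_0>0$ and a strategy $S_{II}^{*}$ for player II with
\[
\sup_{S_I}\vp^2\,E_{x_0}[\tau]<\infty \qquad\text{for every } \vp\in(0,\vp_0).
\]
Choosing for instance $\vp=\vp_0/2$, the strategy $S_{II}^{*}$ forces $E_{x_0}[\tau]<\infty$ regardless of the strategy employed by player I.

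Next I would invoke the dichotomy. If it were the case that $\eta>\eta_p$, then part (ii) of Theorem \ref{thm1.3}, applied to the particular strategy $S_{II}^{*}$, would supply a strategy for player I making $E_{x_0}[\tau]=\infty$, contradicting the finiteness just obtained. Hence $\eta\le\eta_p$, and since $\eta$ was arbitrary in the stated interval, passing to the supremum yields
\[
\eta_p\ge\pi\left[1-\frac12\sqrt{\frac{2(p-1)}{p}}\,\right].
\]

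There is no substantive obstacle; the only mildly delicate point is matching the quantifiers on $\vp$ between Theorems \ref{thm1.1} and \ref{thm1.3}. Theorem \ref{thm1.1} provides uniform finiteness over an interval $(0,\vp_0)$ of noise levels, whereas Theorem \ref{thm1.3}(i) is phrased for the game at a single value of $\vp$; specializing to any one $\vp\in(0,\vp_0)$ bridges the two statements and makes the corollary, as the authors say, truly immediate.
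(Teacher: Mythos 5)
Your argument is correct and matches the paper's intent: the paper defines $\eta_p=\sup\mathcal{A}$, where $\mathcal{A}$ is the set of angles for which player II can force $E_{x_0}[\tau]<\infty$, and Theorem \ref{thm1.1} (specialized to a single $\vp\in(0,\vp_0)$, exactly as you do) shows every $\eta<\pi\bigl[1-\frac 12\sqrt{2(p-1)/p}\,\bigr]$ lies in $\mathcal{A}$, whence the bound. Your detour through the contradiction with Theorem \ref{thm1.3}(ii) is harmless but unnecessary, since $\eta\in\mathcal{A}$ already gives $\eta\le\sup\mathcal{A}=\eta_p$ directly.
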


The next result is a complement to this.

\begin{thm}\label{thm1.5}
i) The critical angle $\eta_p$ satisfies $\eta_p\leq\pi$.

\n ii) For nonconvex wedges $W_\eta$, there is a strategy for player I such that
\[
E_{x_0}[\tau]=\infty,\quad x_0\in W_\eta,
\]
for every strategy of player II.
\end{thm}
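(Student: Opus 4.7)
The plan is to deduce part (i) from part (ii), and to focus the work on (ii). For (i), once (ii) is in hand, every $\eta>\pi$ satisfies the hypothesis of Theorem \ref{thm1.3}(ii), so $\eta>\eta_p$; letting $\eta\searrow\pi$ gives $\eta_p\le\pi$.

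For (ii), fix $\eta>\pi$ and $x_0=(r_0,\theta_0)\in W_\eta$. The key geometric observation is that, because $\eta>\pi$, one can choose a unit vector $\hat v=(\cos\alpha,\sin\alpha)$ satisfying both $|\alpha|\le(\eta-\pi)/2$ and $|\theta_0-\alpha|<\pi/2$; the first condition forces the open half-plane $H_{\hat v}:=\{x\cdot\hat v>0\}$ into $W_\eta$, and the second ensures $x_0\in H_{\hat v}$. Such an $\alpha$ exists by cases: take $\alpha=\theta_0$ when $|\theta_0|\le(\eta-\pi)/2$, and $\alpha=\mathrm{sgn}(\theta_0)(\eta-\pi)/2$ otherwise. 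Player I's strategy is then, regardless of state, to push by $\vp\hat v$ on every turn Player I wins.

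Setting $Y_k=x_k\cdot\hat v$ and using that the noise is mean zero while any Player II push $w^{(k)}$ satisfies $|w^{(k)}\cdot\hat v|\le\vp$, one obtains
\[
E[Y_{k+1}-Y_k\mid\mathcal{F}_k]=\tfrac12\vp+\tfrac12\,w^{(k)}\cdot\hat v\ge 0
\]
under every Player II strategy, so $Y_k$ is a submartingale with $Y_0>0$; its Doob martingale part $M_k$ has conditional variance uniformly bounded by a constant multiple of $\vp^2$. Because $H_{\hat v}\subset W_\eta$, the game has not ended while $Y_k>0$, so $\tau\ge\tau_0:=\inf\{k:Y_k\le 0\}$, and it suffices to prove $E[\tau_0]=\infty$. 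Suppose toward a contradiction that $E[\tau_0]<\infty$. Writing $Y_k=Y_0+M_k+A_k$ with $A_k\ge 0$ predictable nondecreasing, the definition of $\tau_0$ forces $M_{\tau_0}\le-Y_0<0$. On the other hand, Wald's second identity gives $E[M_{\tau_0\wedge n}^2]\le C\vp^2E[\tau_0]<\infty$, so $(M_{\tau_0\wedge n})$ is $L^2$-bounded and $L^2$-Cauchy, with a.s. limit $M_{\tau_0}$; passing to the $L^1$ limit yields $E[M_{\tau_0}]=\lim E[M_{\tau_0\wedge n}]=0$, contradicting $M_{\tau_0}<0$. I expect the optional stopping step to be the main technical hurdle, since the noise need not have bounded support and bounded-increment versions do not apply directly; the remedy is the uniform bound on conditional second moments, which makes the $L^2$ approximation argument go through.
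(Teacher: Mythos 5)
Your proposal is correct and uses the same underlying game strategy as the paper --- player I tugs by $\vp$ in a fixed direction $\hat v$ chosen so that the half-plane $\{x\cdot\hat v>0\}$ contains $x_0$ and sits inside the wedge, making $Y_k=x_k\cdot\hat v$ a submartingale that must nonetheless reach $(-\infty,0]$ by the time the game ends --- but the way you extract the contradiction is genuinely different. The paper proves the statement for $W_\pi$ (Theorem \ref{thm4.1}) with $u=\pi_1$ and declares Theorem \ref{thm1.5} an immediate consequence; your explicit choice of $\alpha$ with $|\alpha|\le(\eta-\pi)/2$ and $|\theta_0-\alpha|<\pi/2$ supplies the reduction the paper leaves implicit, and correctly notes that $\tau_0\le\tau$ because $\partial W_\eta$ lies outside the open half-plane. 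For the contradiction itself, the paper invokes Burkholder's good-$\lambda$ inequality to bound $E_{x_0}[u(x_\tau)^*]$ by a constant times $E_{x_0}[\tau+u(x_0)]$, deduces uniform integrability of $\{u(x_{\tau\wedge k})\}$, and applies optional stopping to the submartingale directly; you instead take the Doob decomposition $Y_k=Y_0+M_k+A_k$ and use orthogonality of martingale increments to get $E[M_{\tau_0\wedge n}^2]\le C\vp^2E[\tau_0]$, whence $L^2$-convergence and $E[M_{\tau_0}]=0$ against $M_{\tau_0}\le -Y_0<0$. Your route is more elementary and self-contained (no maximal-function machinery); what the paper's route buys is the stronger conclusion $E_{x_0}[u(x_\tau)^*]\lesssim E_{x_0}[\tau]$, though that extra strength is not needed here. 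Two small points: your stated worry that ``the noise need not have bounded support'' is moot, since the noise measure $\mu$ is compactly supported by definition (that is what $\alpha$ encodes), so increments are in fact bounded by $\gamma\vp$ --- though your second-moment bound works regardless; and the sentence deducing part (i) is phrased circularly (``$\eta>\pi$ satisfies the hypothesis of Theorem \ref{thm1.3}(ii)'' --- that hypothesis is $\eta>\eta_p$), but the intended and correct deduction is that part (ii) shows no $\eta>\pi$ lies in the set $\mathcal{A}$ from the proof of Theorem \ref{thm1.3}, so $\eta_p=\sup\mathcal{A}\le\pi$. Both your argument and the paper's gloss over the terminal move of the game, where both players are forced onto $\partial W_\eta$ and the submartingale inequality can fail by at most $\alpha\vp$; this is harmless but worth a sentence in either writeup.
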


There is an interesting connection with our lower bound on $\eta_p$ and results of Aronsson (1986). His results can be shown to imply that there is $\wt{\eta_p}$ such that the boundary value problem
\begin{align*}
\Delta_p\,u&=0\quad\text{in}\quad W_{\wt{\eta_p}}\\
u&=0\quad\text{on}\quad\partial W_{\wt{\eta_p}}
\end{align*}
has a solution $u\in C\left(\ovl{W_{\wt{\eta_p}}}\right)\cap C^\infty\left(\ovl{W_{\wt{\eta_p}}}\backslash\{0\}\right)$, positive on $W_{\wt{\eta_p}}$, with the form $u(x)=r^2h(\theta)$. In fact,
\[
\widetilde{\eta_p}=\pi\left[1-\frac 12\sqrt{\frac{2(p-1)}p}\ \right],
\]
which is exactly our lower bound on $\eta_p$. For Brownian motion, it is easy to show that the function
\[
u(x)=r^2\cos 2\theta
\]
satisfies $u>0$ on $W_{\pi/2}$, $u\in C\left(\ovl{W_{\pi/2}}\right)\cap C^\infty\left(\ovl{W_{\pi/2}}\backslash\{0\}\right)$ and
\begin{align*}
\Delta u&=0\quad\text{in}\quad W_{\pi/2}\\
u&=0\quad\text{on}\quad\partial W_{\pi/2}.
\end{align*}
Results and methods of Davis and Zhang (1994) or Burkholder (1977) can be used to show that
\[
E_x\left[\tau_{W_\eta}\right]<\infty\quad\text{iff}\quad\eta<\frac{\pi}2.
\]
This leads us to conjecture that our lower bound on $\eta_p$ is sharp: that is,
\[
\eta_p=\pi\left[1-\frac 12\sqrt{\frac{2(p-1)}p}\ \right].
\]
Our method is not refined enough to make this determination.

In the case of Brownian motion, there is more known about the exit time from unbounded domains. In the case of axially symmetric cones in $\bb{R}^d$ ($d\geq 2)$, Burkholder (1977) showed there is a critical angle for the cone in which the $p^\text{th}$ moment of the exit time is finite. This result was extended to conditioned Brownian motion by Davis and Zhang (1994). DeBlassie (1987) and Ba\~nuelos and Smits (1997) found series expansions for $P_x(\tau_D>t)$ for very general cones. The case of conditioned Brownian motion was also covered in the latter reference. The series expansions immediately show that $P_x(\tau_D>t)$ decays as a power of $t$, were the power depends on the geometry of the cone. For the parabolic domain
\[
D=\{(x,y)\in\bb{R}^2:x>0,\ |y|<x^{1/2}\},
\]
Ba\~nuelos et al. (2001) showed that
\[
P_x(\tau_D>t)\approx e^{-ct^{1/3}}
\]
for large values of $t$. Our results shed some light on the corresponding situation for the tug-of-war. Since the domain
\[
D_{A,\gamma}=\{ \{(x,y)\in\bb{R}^2:x>0,\ |y|<Ax^\gamma\},\quad A>0,\ 0<\gamma<1
\]
is contained in wedges of arbitrarily small aperture, Theorem \ref{thm1.1} immediately yields the following corollary.
\begin{cor}\label{cor1.6}
For the domain $D_{A,\gamma}$, there exists a strategy for player II and $\vp_0>0$ such that
\[
\sup_{S_I}\sup_{0<\vp<\vp_0}\vp^2E_{x_0}[\tau]<\infty,\quad x_0\in D_{A,\gamma}.
\]
\end{cor}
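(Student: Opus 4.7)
The plan is to deduce Corollary \ref{cor1.6} from Theorem \ref{thm1.1} via an elementary geometric containment argument together with a pointwise coupling of the two games. First I would fix any $\eta\in\bigl(0,\pi[1-\tfrac12\sqrt{2(p-1)/p}\,]\bigr)$ so that Theorem \ref{thm1.1} is applicable to $W_\eta$, and verify that for $b$ sufficiently large (depending only on $A$, $\gamma$, $\eta$) the translated wedge $W_\eta^{(b)}:=W_\eta+(-b,0)$ contains $D_{A,\gamma}$. In Cartesian form $W_\eta^{(b)}=\{(x,y):x>-b,\ |y|<(x+b)\tan(\eta/2)\}$, so the inclusion reduces to checking
\[
\sup_{x>0}\frac{A x^\gamma}{x+b}\leq\tan(\eta/2).
\]
A one-variable calculus exercise shows the supremum is attained at $x=\gamma b/(1-\gamma)$ and equals $A\gamma^\gamma(1-\gamma)^{1-\gamma}b^{\gamma-1}$; since $\gamma<1$, this tends to $0$ as $b\to\infty$, so $b$ may be chosen large enough to make the inequality hold.

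Next, because the tug-of-war dynamics (the $\vp$-move rule and the noise distribution) are translation invariant, Theorem \ref{thm1.1} applied to $W_\eta^{(b)}$ yields a strategy $S_{II}^{\ast}$ for player II and $\vp_0>0$ with
\[
\sup_{S_I}\ \sup_{0<\vp<\vp_0}\vp^2\,E_{x_0}\bigl[\tau^{W_\eta^{(b)}}\bigr]<\infty,\qquad x_0\in W_\eta^{(b)}.
\]
For any starting point $x_0\in D_{A,\gamma}$ I would let player II play the game in $D_{A,\gamma}$ using this same $S_{II}^{\ast}$ (it is admissible since it only prescribes moves with $|v|\leq\vp$). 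Because $D_{A,\gamma}\subset W_\eta^{(b)}$, running the two games on a common probability space with identical player-I responses and identical noise gives pathwise $\tau\leq\tau^{W_\eta^{(b)}}$, since leaving the smaller set occurs no later than leaving the larger. Taking expectations and passing the inequality through both suprema gives the conclusion of Corollary \ref{cor1.6}.

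The only genuine obstacle is the geometric containment, and as noted it dissolves into a short calculus computation that exploits $\gamma<1$. Once that is in hand everything else is a routine coupling argument; all the serious probabilistic and PDE work is done inside Theorem \ref{thm1.1}, which is exactly why the corollary is described in the introduction as an immediate consequence.
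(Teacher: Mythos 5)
Your argument is correct and is exactly the route the paper takes: the paper dismisses the corollary with the single observation that $D_{A,\gamma}$ is contained in (translated) wedges of arbitrarily small aperture, and your computation of $\sup_{x>0}Ax^\gamma/(x+b)=A\gamma^\gamma(1-\gamma)^{1-\gamma}b^{\gamma-1}\to 0$ together with the pathwise monotonicity of exit times under domain inclusion supplies precisely the details being elided. Nothing further is needed.
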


The article is organized as follows. In section two we give the rigorous definition of the tug-of-war. We also summarize some results of Peres and Sheffield and give our fundamental computational tool. Section three is devoted to the proof of Theorem \ref{thm1.1}, using Theorem \ref{thm1.2}. In section four we prove Theorem \ref{thm1.5}, making use of some ideas of Burkholder (1977). Then we prove Theorem \ref{thm1.3}. Finally, in section five we prove Theorem \ref{thm1.2}.

\section{Preliminaries}\label{sec2}

\indent 

Let $D\subseteq\bb{R}^d$ be open and connected. The \emph{noise measure} $\mu$ is a compactly supported mean zero Borel probability measure on $\bb{R}^d$ which is preserved by orthogonal transformations of $\bb{R}^d$ that fix the first basis vector ${\bf e}_1$. For each $v\in\bb{R}^d$, let $\Psi$ be $|v|$ times some orthonormal transformation of $\bb{R}^d$ chosen so that $\Psi({\bf e}_1)=v$. Define a new probability measure on the Borel sets of $\bb{R}^d$ by
\[
\mu_v(B)=\mu\left(\Psi^{-1}(B)\right).
\]
Since $\mu$ is invariant under orthogonal transformations of $\bb{R}^d$ which fix ${\bf e_1}$, $\mu_v$ is independent of the choice of $\Psi$. For $R>0$ and $z\in\bb{R}^d$, let
\[
B_R(z)=\{x\in\bb{R}^d:|z-x|<R\}
\]
and set
\[
\alpha=1+\inf\{R:\mu(B_R(0))=1\}.
\]
The tug-of-war in $D$, with noise $\mu$, is played as follows. Let $x_0\in D$ be the initial game position. At the $k^\text{th}$ turn, a fair coin is tossed.
\begin{itemize}
\item If $\text{dist}(x_{k-1},\partial D)>\alpha\vp$, then the winning player chooses $v_k\in\bb{R}^d$ with $|v_k|\leq\vp$ and the game position is moved to
\[
x_k=x_{k-1}+v_k+z_k,
\]
where $z_k$ is a random noise vector sampled from $\mu_{v_k}$.
\item If $\text{dist}(x_{k-1},\partial D)\leq \alpha\vp$, then the winning player chooses $x_k\in\partial D$ with $|x_k-x_{k-1}|\leq \alpha\vp$ and the game ends.
\end{itemize}
This is a basic description of the game movement. There are are many possible choices of payoffs; for instance,
\begin{itemize}
\item the payoff can occur only when the game ends;
\item there is a running payoff at each stage of the game;
\item the payoff is a combination of the two.
\end{itemize}
Also, there are related games that have been studied. See Peres et al. (2009), Peres et al. (2007), Lazarus et al. (1996), Maitra and Sudderth (1998) and Spencer (1977). 

Let $\pi_j$ be the projection to the $j^\text{\,th}$ coordinate and use $C=\{C_{ij}\}$ to denote the covariance matrix of $\mu$:
\[
C_{ij}=\int\pi_i(x)\pi_j(x)d\mu(x).
\]
Note that $C$ is diagonal and $C_{ii}=C_{jj}$ for $2\leq i,j\leq d$. Define
\[
p=p(\mu)=\frac{C_{11}+C_{22}+1}{C_{22}}.
\]
Then for some $\beta>0$, with $q$ being the conjugate of $p$, we have
\[
C_{11}+1=\frac\beta q
\]
\[
C_{ii}=\frac\beta p,\quad 2\leq i\leq d.
\]
The \emph{game history up to step} $k$ is a sequence of moves
\[
h_k=(x_0,v_1,x_1,v_2,x_2,\dots,v_k,x_k)
\]
regarded as an element of the set
\[
H_k=D\times\left(\ovl{B_\vp(0)}\times\ovl D\right)^k;
\]
we use the convention that if the game terminated at time $j<k$, then $v_m=0$ and $x_m=x_j$ for $m\geq j$. The \emph{complete history space} $H_\infty$ is the set of all infinite position sequences
\[
h=(x_0,v_1,x_1,v_2,x_2,\dots)
\]
endowed with the product topology. A \emph{strategy} is a sequence of Borel measurable maps from $H_k$ to $\ovl{B_\vp(0)}$ giving the move a player would make at the $k^\text{th}$ step as a function of the game history.

A pair of strategies $(S_I,S_{II})$ for players I and II, respectively, and a starting point $x_0=x$ determine a unique Borel probability measure on $H_\infty$. We will use $E_x$ to denote the corresponding expectation.

In what follows:
\begin{itemize}
\item $x\in\bb{R}^d$ will be regarded as a column vector;
\item $x^T$ will be its transpose;
\item $(x,y)$ will be the usual Euclidean inner product of $x,y\in\bb{R}^d$;
\item $\displaystyle ||A||=\sup_{|v|\leq 1}|Av|$ will be norm of the $d\times d$ matrix $A$.
\end{itemize}

The infinity Laplacian operator is defined by
\[
\Delta_\infty u=|\nabla u|^{-2}\sum_{i,j}\frac{\partial u}{\partial x_i}\,\frac{\partial^2u}{\partial x_i\partial x_j}\,\frac{\partial u}{\partial x_j}
\]
when $u$ is sufficiently regular. Then the $p$-Laplacian can be expressed as
\begin{equation}\label{eq2.1}
\Delta_p\,u=\frac 1p\Delta u+\left(\frac 1q-\frac 1p\right)\Delta_\infty u,
\end{equation}
where $\Delta=\Delta_2$ is the usual Laplacian.

The following facts are proved in Peres and Sheffield (2008). Given a symmetric $d\times d$ matrix $A$ and $\xi\in\bb{R}^d\backslash \{0\}$, define
\[
\phi(x)=x^TAx+(\xi,x)
\]
and
\begin{align}\label{eq2.2}
\psi(v)&=E_0[\phi(x_1)|\text{ player I won and chose $v=v_1$ }]\notag\\
&=E_0[\phi(v_1+z_1)|\text{ player I won and chose $v=v_1$ }].
\end{align}
Note we also have
\begin{equation}\label{eq2.3}
\psi(v)=E_0[\phi(v_1+z_1)|\text{ player II won and chose $v=v_1$ }].
\end{equation}
Then for
\begin{equation}\label{eq2.4}
B=\left(\frac\beta q-\frac\beta p\right)A+\frac\beta p(\text{Tr\,}A)I,
\end{equation}
we have
\begin{equation}\label{eq2.5}
\psi(v)=(\xi,v)+v^TBv,
\end{equation}
\begin{equation}\label{eq2.6}
\psi\left(\frac{\vp\xi}{|\xi|}\right)=\vp|\xi|+\frac 12\Delta_\infty\psi(0)\,\vp^2,
\end{equation}
and for $\vp<1$, if $v_\text{MAX}\in\ovl{B_\vp(0)}$ is such that $\psi(v_\text{MAX})$ is maximal, then
\begin{equation}\label{eq2.7}
\left|\psi(v_\text{MAX})-\vp|\xi|-\frac 12\Delta_\infty\psi(0)\,\vp^2\right|\leq\frac{16||B||^2}{|\xi|}\,\vp^3.
\end{equation}

Since $-\psi$ is obtained from $\psi$ by replacing $\xi$ and $B$ by $-\xi$ and $-B$, respectively, the analogue of \eqref{eq2.6} for $-\psi$ is
\begin{equation}\label{eq2.8}
\psi\left(-\frac{\vp\xi}{|\xi|}\right)=-\vp|\xi|+\frac 12\Delta_\infty\psi(0)\,\vp^2.
\end{equation}

Note that by \eqref{eq2.4}--\eqref{eq2.5} and \eqref{eq2.1},
\begin{align}\label{eq2.9}
\Delta_\infty\psi(0)&=\left(\frac\beta q-\frac\beta p\right)\Delta_\infty\phi(0)+\frac\beta p\Delta\phi(0)\notag\\
&=\beta\Delta_p\,\phi(0).
\end{align}

\begin{lem}\label{lem2.1}
Let $A$ be a symmetric $d\times d$ matrix and let $\xi\in\bb{R}^d\backslash \{0\}$. Fix $k\geq 0$ and let
\[
h_k=(x_0,v_1,x_1,v_2,x_2,\dots,v_k,x_k)
\]
be the game history up to step $k$. Set
\[
\phi_1(x)=(x-x_k)^TA(x-x_k)+(\xi,x-x_k),\quad x\in\bb{R}^d.
\]

i) Suppose at move $k+1$, player I adds $v$ to the game position if he wins and player II adds $z$ to the game position if he wins. Then
\[
E_{x_0}[\phi_1(x_{k+1})|h_k]=\tfrac 12\psi(v)+\tfrac 12\psi(z).
\]

ii) If player II's strategy at move $k+1$ is to tug $\vp$ units in the direction of $-\xi$, then for any strategy of player I,
\[
E_{x_0}[\phi_1(x_{k+1})|h_k]\leq\frac{M}{|\xi|}\vp^3+\frac\beta 2\Delta_p\,\phi_1(x_k)\,\vp^2,
\]
where $M=8\beta^2(d+1)^2||A||^2$.
\end{lem}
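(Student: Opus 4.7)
My plan is to reduce both parts of the lemma to the identities (2.5)–(2.8) for $\psi$, together with the observation that $\phi_1$ is just the quadratic $\phi(y) = y^T A y + (\xi,y)$ recentered at $x_k$, so the game $p$-Laplacian and the infinity Laplacian of the two functions agree at corresponding points: in particular $\Delta_p \phi_1(x_k) = \Delta_p \phi(0)$ and $\Delta_\infty \phi_1(x_k) = \Delta_\infty \phi(0)$.

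For part (i), I would condition on the fair coin at move $k+1$. If player I wins and plays $v$, then $x_{k+1} - x_k = v + z_{k+1}$ with $z_{k+1}\sim\mu_v$; since $\phi_1(x_k+y) = \phi(y)$, the definition (2.2) of $\psi$ gives
\[
E_{x_0}[\phi_1(x_{k+1})\mid h_k,\ \text{I wins}] = \psi(v).
\]
The analogue $\psi(z)$ for the player II case is (2.3). Averaging with weight $1/2$ yields (i).

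For part (ii), I would apply (i) with $z = -\vp\xi/|\xi|$. The key asymmetry is that (2.8) gives $\psi(-\vp\xi/|\xi|)$ exactly, while for any strategy of player I I only have $\psi(v) \leq \psi(v_{\text{MAX}})$, which (2.7) controls up to an error of $16\|B\|^2\vp^3/|\xi|$. Averaging, the $\pm\vp|\xi|$ first-order terms cancel, leaving
\[
E_{x_0}[\phi_1(x_{k+1})\mid h_k] \leq \tfrac{1}{2}\Delta_\infty\psi(0)\vp^2 + \frac{8\|B\|^2}{|\xi|}\vp^3.
\]
Invoking (2.9) and the translation identity $\Delta_p\phi_1(x_k) = \Delta_p\phi(0)$ converts the first term to $\tfrac{\beta}{2}\Delta_p\phi_1(x_k)\vp^2$.

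The remaining step is to bound $\|B\|$ so as to identify the constant $M$. From (2.4), using $|1/q - 1/p| \leq 1$ and $|\text{Tr}\,A| \leq d\|A\|$ (since $A$ is symmetric with eigenvalues bounded by $\|A\|$), the triangle inequality gives $\|B\| \leq \beta(d+1)\|A\|$, whence $8\|B\|^2 \leq 8\beta^2(d+1)^2\|A\|^2 = M$. The whole argument is essentially bookkeeping; the one conceptual point worth flagging is that player II's prescribed tug in the direction of $-\xi$ is precisely calibrated so that, at order $\vp$, it cancels the best possible gain player I can extract from $v_{\text{MAX}}$, leaving only the quadratic $p$-Laplacian term and a controlled cubic remainder.
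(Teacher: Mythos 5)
Your proposal is correct and follows essentially the same route as the paper's proof: part (i) by conditioning on the coin toss and using translation invariance to identify the conditional expectations with $\psi$, and part (ii) by combining the exact identity \eqref{eq2.8} for player II's tug with the bound \eqref{eq2.7} on $\psi(v_\text{MAX})$, cancelling the $\pm\vp|\xi|$ terms, and converting $\Delta_\infty\psi(0)$ via \eqref{eq2.9}. Your explicit estimate $\|B\|\leq\beta(d+1)\|A\|$ from \eqref{eq2.4} correctly supplies the step the paper leaves implicit in identifying the constant $M$.
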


\begin{proof} 
By translation invariance of both the game and the infinity Laplacian, for
\begin{align*}
\psi_1(v)&=E_{x_0}[\phi_1(x_{k+1})|\,h_k,\text{ player I won at move $k+1$ and chose $v_{k+1}=v$ }]\\
&=E_{x_0}[\phi_1(x_{k+1})|\,h_k,\text{ player II won at move $k+1$ and chose $v_{k+1}=v$ }],
\end{align*}
we have
\begin{equation}\label{eq2.10}
\psi_1(v)=(\xi,v)+v^TBv\quad\text{(by \eqref{eq2.2} and \eqref{eq2.5})},
\end{equation}
and
\begin{align}\label{eq2.11}
\psi_1\left(-\frac{\vp\xi}{|\xi|}\right)&=-\vp|\xi|+\frac 12\Delta_\infty\psi_1(x_k)\,\vp^2\quad\text{(by \eqref{eq2.8})}\notag\\
&=-\vp|\xi|+\frac \beta 2\Delta_p\,\phi_1(x_k)\,\vp^2\quad\text{(by \eqref{eq2.9}).}
\end{align}
Moreover, for $\vp<1$, by \eqref{eq2.7} and \eqref{eq2.9} we have
\begin{equation}\label{eq2.12}
\left|\psi_1(v_\text{MAX})-\vp|\xi|-\frac\beta 2\Delta_p\,\phi_1(x_k)\,\vp^2\right|\leq\frac{16||B||^2}{|\xi|}\,\vp^3,
\end{equation}
where $v_\text{MAX}\in \ovl{B_\vp(0)}$ is such that $\psi_1(v_\text{MAX})$ is maximal.

If $h_k^I$ indicates that at move $k+1$, player I chooses $v_{k+1}=v$ if he wins and if $h_k^{II}$ indicates that at move $k+1$, player II chooses $v_{k+1}=z$ if he wins, then we have
\begin{align}\label{eq2.13}
E_{x_0}[\phi_1(x_{k+1})|h_k]&=\tfrac 12E_{x_0}\left[\phi_1(x_{k+1})|h_k^{I}\right]+\tfrac 12E_{x_0}\left[\phi_1(x_{k+1})|h_k^{II}\right]\notag\\
&=\tfrac 12\psi_1(v)+\tfrac 12\psi_1\left(z\right)\\
&=\tfrac 12\psi(v)+\tfrac 12\psi(z)\quad\text{(by \eqref{eq2.10} and \eqref{eq2.5}).}\notag
\end{align}
This yields part i) of the lemma.

As for part ii), the formula \eqref{eq2.13} implies that
\begin{align*}
E_{x_0}[\phi_1(x_{k+1})|h_k]&=\frac 12\psi_1(v)+\frac 12\psi_1\left(-\frac{\vp\xi}{|\xi|}\right)\\
&\leq\frac 12\left[\vp\,|\xi|+\frac\beta 2\Delta_p\,\phi_1(x_k)\,\vp^2+\frac{16||B||^2}{|\xi|}\,\vp^3\right]\\
&\qquad +\frac 12\left[-\vp\,|\xi|+\frac\beta 2\Delta_p\,\phi_1(x_k)\,\vp^2\right]
\intertext{(by \eqref{eq2.12} and \eqref{eq2.11}, respectively)}
&=\frac{8||B||^2}{|\xi|}\,\vp^3+\frac\beta 2\Delta_p\,\phi_1(x_k)\,\vp^2\\
&\leq\frac{M}{|\xi|}\,\vp^3+\frac\beta 2\Delta_p\,\phi_1(x_k)\,\vp^2\qquad\text{(by \eqref{eq2.4}),}
\end{align*}
as desired.
\end{proof}


\section{Proof of Theorem \ref{thm1.1}}\label{sec3}

\indent 

Let 
\[
\eta<\pi\left[1-\frac 12\sqrt{\frac{2(p-1)}{p}}\ \right].
\]
Choose $\eta_1\in\left(\eta,\pi\left[1-\frac 12\sqrt{\frac{2(p-1)}{p}}\,\right]\right)$ and let $u(x)=r^2f(\theta)$ be from Theorem \ref{thm1.2} for the wedge $W_{\eta_1}$. The tug-of-war is translation invariant, so to prove Theorem \ref{thm1.1}, it suffices to show that if $\tau_W$ is the time to end the game in the translated wedge
\[
W=W_\eta+2(\alpha +1),
\]
(recall $\alpha$ us from the beginning of section 2), then there is a strategy for player II and $\vp_0>0$ such that
\begin{equation}\label{eq3.1}
\sup_{S_I}\sup_{0<\vp<\vp_0}\vp^2\,E_{x_0}[\tau_W]<\infty,\quad x_0\in W,
\end{equation}
where the supremum is taken over all strategies for player I.

We have $u\in C^3(\ovl{W})$ and its second and third order partials are of the forms
\[
\sum_{j=0}^2k_j(\theta)f^{(j)}(\theta)
\]
\[
\frac 1r\sum_{j=0}^3\ell_j(\theta)f^{(j)}(\theta),
\]
respectively, where the $k_j$'s and $\ell_j$'s are bounded. Thus for all $i,j,k$,
\begin{equation}\label{eq3.2}
\sup_{\ovl W}\left|\frac{\partial^2u}{\partial x_i\partial x_j}\right|
\leq\sup\left\{\,\left|\frac{\partial^2u}{\partial x_i\partial x_j}\right|: x\in W_\eta\text{ and }|x|\geq \alpha+1\right\}<\infty
\end{equation}
and
\begin{equation}\label{eq3.3}
\sup_{\ovl W}\left|\frac{\partial^3u}{\partial x_i\partial x_j\partial x_k}\right|
\leq\sup\left\{\,\left|\frac{\partial^3u}{\partial x_i\partial x_j\partial x_k}\right|: x\in W_\eta\text{ and }|x|\geq\alpha+1\right\}<\infty.
\end{equation}
These bounds are the reason why we must use the $u$ corresponding to $W_{\eta_1}$ and translate $W_\eta$ to $W$---they do not hold in $W_\eta$ if we use the function corresponding to $W_\eta$. A simple computation shows that
\[
|\nabla u|^2=r^2\left[4f^2+(f')^2\right],
\]
and since $|\nabla u|>0$ on $\ovl{W_{\eta_1}}\backslash\{0\}$, we have tthat
\begin{equation}\label{eq3.4}
\inf_{\ovl W}|\nabla u|>0.
\end{equation}
Thus for $\vp<\frac 12$, given any $y\in\ovl W$, for $\gamma=2(\alpha+1)$, we have a Taylor expansion
\begin{equation}\label{eq3.5}
u(x)=u(y)+(\nabla u(y),x-y)+\frac 12(x-y)^TD^2u(y)(x-y)+R(x,y),\quad x\in B_{\gamma\vp}(y),
\end{equation}
where $D^2u$ is the matrix of second order partials of $u$ and the remainder $R(x,y)$ satisfies
\begin{equation}\label{eq3.6}
|R(x,y)|\leq C\vp^3,
\end{equation}
with $C$ independent of $x,y$ and $\vp<\frac 12$.

\bigskip
\n{\bf Proof of \eqref{eq3.1}.} Define 
\begin{equation}\label{eq3.7}
C_1:=C+18\beta^2\sup_{\ovl{W}}\frac{||D^2u||^2}{|\nabla u|^2},
\end{equation}
where $C$ is from \eqref{eq3.6}. Note that $C_1<\infty$ by \eqref{eq3.2} and \eqref{eq3.4}. For $k\geq 0$,  define
\[
M_k=u(x_k)+\frac\beta 2\,\vp^2\,k-C_1k\,\vp^3,
\]
where $x_k$ is the game position at step $k$ for the tug-of-war in $W$. Then $M_k$ is a supermartingale: Indeed, by the Taylor expansion \eqref{eq3.5}, we have
\begin{align}\label{eq3.8}
E_{x_0}\left[M_{k+1}-M_k|h_k\right]&=E_{x_0}\left[u(x_{k+1})-u(x_k)+\frac\beta 2\,\vp^2-C_1\vp^3\,\bigg|\,h_k\,\right]\notag\\
&\leq E_{x_0}\left[(\nabla u(x_k),x_{k+1}-x_k)+\frac 12(x_{k+1}-x_k)^TD^2u(x_k)(x_{k+1}-x_k)\right.\notag\\
&\hspace{1in} \left.+C\vp^3+\frac\beta 2\,\vp^2-C_1\vp^3\,\bigg|\,h_k\,\right]\notag\\
&= E_{x_0}\left[(\nabla u(x_k),x_{k+1}-x_k)+\frac 12(x_{k+1}-x_k)^TD^2u(x_k)(x_{k+1}-x_k)\right.\notag\\
&\hspace{1in} \left.+\left(C-C_1\right)\vp^3+\frac\beta 2\,\vp^2\bigg|h_k\right].
\end{align}
For $\xi=\nabla u(x_k)$ and $A=\frac 12D^2u(x_k)$ in Lemma \ref{lem2.1}, a simple calculation shows that the corresponding $\phi_1$ satisfies 
\begin{equation}\label{eq3.9}
\Delta_p\,\phi_1(x_k)=\Delta_p\,u(x_k)=-1.
\end{equation}
Now assume that player II uses the strategy such that at the $k^\text{th}$ position $x_k$, he moves $\vp$ units in the direction of $-\nabla u(x_k)$ if he wins and player I uses any strategy if he wins. Then by part i) of Lemma \ref{lem2.1}, for
\[
M=18\beta^2||D^2u(x_k)||^2,
\]
we have from \eqref{eq3.8} that
\begin{align*}
E_{x_0}[M_{k+1}-M_k|h_k]&\leq E_{x_0}[\phi_1(x_{k+1})|h_k]+\left(C-C_1\right)\,\vp^3+\frac\beta 2\,\vp^2\\
&\leq \frac M{|\nabla u(x_k)|}\,\vp^3+\frac\beta 2\Delta_p\,\phi_1(x_k)\,\vp^2+\left(C-C_1\right)\,\vp^3+\frac\beta 2\,\vp^2\\
&=\frac M{|\nabla u(x_k)|}\,\vp^3-\frac\beta 2\,\vp^2+\left(C-C_1\right)\,\vp^3+\frac\beta 2\,\vp^2\\
\intertext{(by \eqref{eq3.9})}
&\leq 0,
\end{align*}
by choice of $C_1$ from \eqref{eq3.7}. Thus $M_k$ is a supermartingale, as claimed.

By optional stopping, for $k\geq 0$,
\begin{align*}
u(x_0)=M_0&\geq E_{x_0}[M_{k\wedge\tau_W}]\\
&=E_{x_0}\left[u(x_{k\wedge\tau_W})+\frac\beta 2\vp^2\,(k\wedge\tau_W)-C_1\vp^3\,(k\wedge\tau_W)\right]\\
&\geq\vp^2\,\left[\frac\beta 2-C_1\vp\right]E_{x_0}[k\wedge\tau_W].
\end{align*}
Taking $\vp_0\in\left(0,\frac 12\right)$ so small that $\frac\beta 2-C_1\vp>0$ for all $\vp<\vp_0$, we can use monotone convergence on the left to end up with
\[
\vp^2\,\left[\frac\beta 2-C_1\vp\right]E_{x_0}[\tau_W]\leq u(x_0).
\]
Then
\[
\sup_{0<\vp<\vp_0}\vp^2E_{x_0}[\tau_W]<\infty,\quad x_0\in W,
\]
giving \eqref{eq3.1}.

\bigskip
This completes the proof of Theorem \ref{thm1.1}.\hfill$\square$


\section{Proof of Theorem \ref{thm1.5} and Theorem \ref{thm1.3}}\label{sec4}

Theorem \ref{thm1.5} is an immediate consequence of the following theorem.
\begin{thm}\label{thm4.1}
For the tug-of-war in $W_\pi$, there is a strategy for player I such that the time $\tau$ to end the game satisfies
\[
E_{x_0}[\tau]=\infty,\quad x_0\in W_\pi,
\] 
for any strategy used by player II.
\end{thm}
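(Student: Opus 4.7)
The plan is to mimic the classical argument showing that one-dimensional Brownian motion starting at $x>0$ has infinite expected hitting time of zero. Place $W_\pi$ as the right half-plane $\{x\in\bb{R}^2:\pi_1(x)>0\}$, where $\pi_1$ denotes the first-coordinate projection; then $\text{dist}(x,\partial W_\pi)=\pi_1(x)$ for $x\in W_\pi$. I take player I's strategy to be: whenever he wins the coin flip, play $v=\vp\,\mathbf{e}_1$, i.e.\ push directly away from the boundary. The key claim is that under this strategy, $S_k:=\pi_1(x_k)$ is a submartingale with uniformly bounded increments, regardless of player II's strategy; optional stopping will then furnish a contradiction with $E_{x_0}[\tau]<\infty$.

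To establish the submartingale property, fix $h_k$ with $\tau>k$ and condition on the winner of the coin flip at step $k+1$. If player I wins (probability $\tfrac12$) and plays $v=\vp\,\mathbf{e}_1$, the conditional expected change in the first coordinate equals $\vp+E[\pi_1(z_{k+1})]=\vp$, because $z_{k+1}\sim\mu_{\vp\mathbf{e}_1}$ is the pushforward of the mean-zero $\mu$ under a linear map and hence still has mean zero. If player II wins (probability $\tfrac12$) and plays some $v'\in\ovl{B_\vp(0)}$, the conditional expected change equals $\pi_1(v')\ge-\vp$. Averaging,
\[
E_{x_0}[S_{k+1}-S_k\mid h_k]\ \ge\ \tfrac12\vp+\tfrac12(-\vp)\ =\ 0,
\]
so $\{S_{k\wedge\tau}\}$ is a submartingale, with $|S_{k+1}-S_k|\le|v_{k+1}|+|z_{k+1}|\le\vp+(\alpha-1)\vp=\alpha\vp$. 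Equivalently, this can be read off from part i) of Lemma \ref{lem2.1} applied with $\xi=\mathbf{e}_1$ and $A=0$, noting that the resulting matrix $B$ in \eqref{eq2.4} also vanishes.

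Now suppose for contradiction that $E_{x_0}[\tau]<\infty$. Bounded increments yield $|S_\tau-S_0|\le\alpha\vp\,\tau$, which is integrable, so the optional stopping theorem for submartingales with bounded increments gives $\pi_1(x_0)=S_0\le E_{x_0}[S_\tau]$. But $x_\tau\in\partial W_\pi$ by the definition of $\tau$, so $S_\tau=\pi_1(x_\tau)=0$, forcing $\pi_1(x_0)\le 0$ and contradicting $x_0\in W_\pi$. (One tacitly assumes $\pi_1(x_0)>\alpha\vp$ so that the game does not terminate trivially at step one, or equivalently works with $\vp$ sufficiently small; the assertion $E_{x_0}[\tau]=\infty$ for remaining $x_0$ then follows by restarting after a single step.)

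The main obstacle is conceptual rather than computational: one must recognize that the correct one-dimensional test function is simply the first coordinate $\pi_1$ itself, in direct analogy with the Burkholder (1977) argument for Brownian motion in the half-plane. Once this choice is made, the submartingale property is immediate from the constancy of player I's move together with the mean-zero property of the noise, and no delicate estimate such as \eqref{eq2.7} is required, because the test function is linear and any deviation of player II from the worst-case strategy only helps player I.
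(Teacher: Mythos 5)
Your proof is correct, and its core is the same as the paper's: you use the linear test function $u=\pi_1$, have player I tug $\vp$ units in the direction $\mathbf{e}_1=\nabla u$, and observe that $\pi_1(x_k)$ is then a submartingale whatever player II does (your direct computation is exactly what Lemma \ref{lem2.1}~i) with $\xi=\mathbf{e}_1$, $A=B=0$ gives). Where you genuinely diverge is in justifying optional stopping at $\tau$. The paper proves a good-$\lambda$ inequality for the maximal function $u(x_\tau)^*$ and invokes Lemma 7.1 of Burkholder (1973) to get $E_{x_0}[u(x_\tau)^*]\le C_3E_{x_0}[\tau+u(x_0)]$, hence uniform integrability of $\{u(x_{\tau\wedge k})\}$. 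You instead note that the game's increments are deterministically bounded, $|S_{k+1}-S_k|\le\alpha\vp$, so $\sup_n|S_{\tau\wedge n}-S_0|\le\alpha\vp\,\tau\in L^1$ under the contradiction hypothesis $E_{x_0}[\tau]<\infty$, and dominated convergence gives $E_{x_0}[S_\tau]\ge S_0$ directly. This is the standard bounded-increment optional stopping theorem and it is a legitimate, strictly simpler route; the paper's good-$\lambda$ machinery buys nothing extra here beyond the (unused) quantitative bound on $E_{x_0}[u(x_\tau)^*]$. Both arguments share the same small caveat at the terminal step (when $\mathrm{dist}(x_k,\partial W_\pi)\le\alpha\vp$ the winner must place $x_{k+1}$ on the boundary, so the submartingale inequality can fail there by at most $\alpha\vp$, and one needs $\pi_1(x_0)>\alpha\vp$, i.e.\ $\vp$ small relative to $x_0$); you flag this explicitly, which the paper does not, and your restarting remark handles it adequately.
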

\begin{proof}
Let $u(x)$ be the projection $\pi_1(x)$ onto the first coordinate. Suppose player I's strategy at play $k+1$ is to move $\vp$ units in the direction of $\nabla u(x_k)$ if he wins. To get a contradiction, assume there is a strategy for player II such that
\begin{equation}\label{eq4.1}
E_{x_0}[\tau]<\infty.
\end{equation}
For $\xi=\nabla u(x_k)$ and $A=\frac 12D^2u(x_k)=0$ in Lemma 2.1, we see the corresponding matrix $B$ from \eqref{eq2.4} satisfies $B=0$ and it is easy to show that the corresponding $\phi_1$ satisfies
\[
\Delta_p\,\phi_1(x_k)=0.
\]
Then by part i) of the Lemma, with $v$ denoting the move player II makes at move $k+1$ when he wins,
\begin{align*}
E_{x_0}\left[u\left(x_{k+1}\right)-u\left(x_k\right)|\,h_k\right]&=E_{x_0}\left[(\nabla u(x_k),x_{k+1}-x_k)+\frac 12(x_{k+1}-x_k)^TD^2u(x_k)(x_{k+1}-x_k)\,\bigg|\,h_k\,\right]\notag\\
&=\frac 12\psi_1\left(\frac{\vp\xi}{|\xi|}\right)+\frac 12\psi_1(v)\notag\\
&=\frac 12\left[\left(\xi,\frac{\vp\xi}{|\xi|}\right)+0\right]+\frac 12\left[\left(\xi,v\right)+0\right]\\
&=\frac 12\left[\vp|\xi|+(\xi,v)\right]\\
&\geq 0,
\end{align*}
since $|v|\leq\vp$. Thus $u(x_k)$ is a submartingale.

Let $u\left(x_\tau\right)^*$ be the maximal function defined by
\[
u\left(x_\tau\right)^*=\sup_{k\leq\tau}u(x_k).
\]

Recalling that $\gamma=2(\alpha+1)$, choose $\beta_1>1+\gamma\vp$ and then let $\delta\in(0,1)$ be so small that
\begin{equation}\label{eq4.2}
\frac{\beta_1\gamma\vp\delta}{\beta_1-1-\gamma\vp}<1.
\end{equation}
With the good-$\lambda$ inequalities of Burkholder (1973) in mind, we now show that
\begin{equation}\label{eq4.3}
P_{x_0}\left(u(x_\tau)^*\geq\beta_1\lambda, \tau+u(x_0)\leq\delta\lambda\right)\leq\frac{\gamma\vp\delta}{\beta_1-1-\gamma\vp}P_{x_0}\left(u(x_\tau)^*\geq\lambda\right),\qquad\lambda>0.
\end{equation}
To this end, note that if $u(x_0)>\delta\lambda$, then the left hand side of \eqref{eq4.3} is zero and the inequality is trivial. Thus it is no loss to assume $u(x_0)\leq\delta\lambda$. Let
\[
\xi=\inf\{k\geq 0:u(x_{\tau\wedge k})>\lambda\}.
\]
Then
\[
\lambda\leq u(x_\xi)\leq(1+\gamma\vp)\lambda\text{ on } \{\xi<\infty\}=\{u(x_\tau)^*>\lambda\}.
\]
For $a=\delta\lambda-u(x_0)$ and $[\,\cdot\,]$ denoting the greatest integer function, we have
\begin{align}\label{eq4.4}
P_{x_0}\left(u(x_\tau)^*>\beta_1\lambda,\ \tau+u(x_0)\leq \delta\lambda\right)&=P_{x_0}\left(\xi<\infty,\ \tau\leq a,\ \sup_{\xi\leq k\leq\tau}u(x_k)\geq\beta_1\lambda\right)\notag\\
&\leq P_{x_0}\left(\xi<\infty,\ \sup_{\xi\leq k\leq\xi+[\,a\,]}|u(x_k)-u(x_\xi)|\geq(\beta_1-1-\gamma\vp)\lambda\right)\notag\\
&=E_{x_0}\left[I_{\xi<\infty}P_{x_\xi}\left(\sup_{k\leq [\,a\,]}|u(x_k)-u(x_\xi)|\geq(\beta_1-1-\gamma\vp)\lambda\right)\right].
\end{align}
Now $|u(x_k)-u(x_0)|$ is a nonnegative submartingale, so by Doob's inequality,
\begin{align*}
P_x\left(\sup_{k\leq[\,a\,]}|u(x_k)-u(x)|\geq(\beta_1-1-\gamma\vp)\lambda\right)&\leq\frac 1{(\beta_1-1-\gamma\vp)\lambda}E_x\left[\,\left|u(x_{[\,a\,]})-u(x)\right|\,\right]\\
&\leq\frac 1{(\beta_1-1-\gamma\vp)\lambda}\sum_{i=1}^{[\,a\,]}E_x\left[\,\left|u(x_i)-u(x_{i-1})\right|\,\right]
\intertext{(where we take $x_0=x$ in the summation)}
&\leq\frac {[\,a\,]\gamma\vp}{(\beta_1-1-\gamma\vp)\lambda}
\intertext{(using that $u=\pi_1$)}
&\leq\frac {\delta\gamma\vp}{\beta_1-1-\gamma\vp}.
\end{align*}

Using this in \eqref{eq4.4}, we get \eqref{eq4.3}. Then by Lemma 7.1 in Burkholder (1973),
\[E_{x_0}\left[u(x_\tau)^*\right]\leq C_3E_{x_0}\left[\tau+u(x_0)\right],
\]
where
\[
C_3=\beta_1\delta^{-1}\left(1-\frac{\beta_1\delta\gamma\vp}{\beta_1-1-\gamma\vp}\right)^{-1}.
\]
Combined with \eqref{eq4.1}, this implies that the family
\[
\{u\left(x_{\tau\wedge k}\right):k\geq 0\}
\]
is uniformly integrable. By optional stopping, since $u(x_k)$ is a submartingale,
\[
u(x_0)\leq E_{x_0}\left[u\left(x_{\tau\wedge k}\right)\right]
\]
and by the uniform integrability, we can let $k\to\infty$ to get
to end up with
\[
0<u(x_0)\leq E_{x_0}\left[u\left(x_{\tau}\right)\right]=0.
\]
This yields the desired contradiction and the proof of Theorem \ref{thm4.1} is complete
\end{proof}

\n{\bf Proof of Theorem \ref{thm1.3}.} Let $\mathcal{A}$ be the set of all $\eta\in (0,2\pi)$ such that for some strategy of player II, the time $\tau$ to end the tug-of-war in $W_\eta$ satisfies
\[
E_{x_0}[\tau]<\infty,
\]
regardless of the strategy used by player I. Then it suffices to show that $\mathcal{A}\neq\emptyset$, for in that event we take $\eta_p=\sup \mathcal{A}$. But by Theorem \ref{thm1.1}, $\mathcal{A}\neq\emptyset$, as desired.\hfill$\square$


\section{Proof of Theorem \ref{thm1.2}}\label{sec5}

\indent 

The proof of Theorem \ref{thm1.2} is long and technical, so before giving the details, we motivate our argument. If $u(x)=r^2f(\theta)\geq 0$ is a $C^2$ solution to
\begin{align*}
&\Delta_p\,u=-1\text{ in }W_\eta\\
& u=0\text{ on }\partial W_\eta,
\end{align*}
then it is a routine matter to show that on $\left(-\frac\eta 2,\frac\eta 2\right)$,
\begin{equation}\label{eq5.1}
4f^2\left[1+2f+\tfrac 1pf''\right]+(f')^2\left[1+\tfrac{2(3p-4)}pf+\tfrac{p-1}pf''\right]=0.
\end{equation}
By symmetry we expect to have $f'(0)=0$. Thus it should suffice to consider \eqref{eq5.1} on $\left(0,\frac\eta 2\right)$ with
\begin{equation}\label{eq5.2}
f'(0)=0,\quad f\left(\tfrac\eta 2\right)=0.
\end{equation}
For this, set
\[
y=f\text{ and }a=f(0),
\]
and then make the transformation
\begin{equation}\label{eq5.3}
H_a(y)=(y')^2.
\end{equation}
This converts \eqref{eq5.1} into the equation
\begin{equation}\label{eq5.4}
4y^2\left[1+2y+\tfrac 1{2p}H_a'(y)\right]+H_a(y)\left[1+\tfrac{2(3p-4)}py+\tfrac{p-1}{2p}H_a'(y)\right]=0,
\end{equation}
and since
\[
H_a(a)=H_a(y(0))=(y'(0))^2,
\]
the condition \eqref{eq5.2} tells us that
\begin{equation}\label{eq5.5}
H_a(a)=0.
\end{equation}
Modulo technicalities, \eqref{eq5.3} yields an implicit representation of $y=f(\theta)$:
\begin{equation}\label{eq5.6}
\theta=\int_{y(\theta)}^a\frac{dw}{\sqrt{H_a(w)}},\qquad \theta\in\left(0,\tfrac\eta 2\right).
\end{equation}
In particular,
\begin{equation}\label{eq5.7}
\frac\eta 2=\int_{y(\eta/2)}^a\frac{dw}{\sqrt{H_a(w)}}=\int_0^a\frac{dw}{\sqrt{H_a(w)}}.
\end{equation}
Our approach is to work backwards from \eqref{eq5.4}--\eqref{eq5.5}. Thus we need to show for each $a>0$, there is a solution $H_a$ to \eqref{eq5.4} on $(0,a)$ satisfying $H_a(0)=0$. Then using \eqref{eq5.3} and \eqref{eq5.6}, we get an implicit solution $f=y$ of \eqref{eq5.1}. There will be some $\theta_a>0$ such that $y(\theta_a)=0$ and $y>0$ on $(0,\theta_a)$. In light of \eqref{eq5.7}, we need to show that for some $a>0$,
\[
\frac\eta 2=\theta_a=\int_0^a\frac{dw}{\sqrt{H_a(w)}}.
\]
This will happen if we show $\theta_a$ is continuous as a function of $a>0$ with $\theta_a\to 0$ as $a\to 0^+$ and
\[
\lim_{a\to\infty}\int_0^a\frac{dw}{\sqrt{H_a(w)}}=\frac\pi 2\left[1-\frac 12\sqrt{\frac{2(p-1)}p}\ \right].
\]
The latter is hard to prove directly. The trick is to change variables
\[
G_a(y)=(ay)^{-2}H_a(ay),\qquad 0<y<1.
\]
This converts the equation \eqref{eq5.4} to
\begin{equation}\label{eq5.8}
G'_a(y)=-\frac{8p\left[\,\frac 1a+2y\,\right]+\left[\,\frac{2p}a+4(3p-2)y+2(p-1)yG_a(y)\,\right]G_a(y)}{y^2\left[\,4+(p-1)G_a(y)\,\right]}.
\end{equation}
We will show $G_a$ is decreasing in $a$ and converges to a function $K$ which solves the equation resulting from taking the limit of \eqref{eq5.8} as $a\to\infty$. Then
\[
\lim_{a\to\infty}\int_0^a\frac{dw}{\sqrt{H_a(w)}}=\lim_{a\to\infty}\int_0^1\frac{du}{u\sqrt{G_a(u)}}=\int_0^1\frac{du}{u\sqrt{K(u)}}.
\]
The latter can be easily computed using residues, upon making an appropriate change of variables. 

We point out that \eqref{eq5.4} can be reduced to an Abel differential equation of the second kind (see Polyanin and Zaitsev (2003)). There are some equations in this family that have parametric solutions or can be solved explicitly in terms of tabulated functions, but \eqref{eq5.4} is not one of these. Panayotounakos (2005) has given exact analytic solutions, but the form corresponding to \eqref{eq5.4} is unwieldy and seems impossible to use for our purpose.

Now we give the details. For $a$, $y>0$ and $w>-\frac{p-1}4$, define
\begin{equation}\label{eq5.9}
F_a(y,w)=\frac{8p\left[\,\frac 1a+2y\,\right]+\left[\,\frac{2p}a+4(3p-2)y+2(p-1)yw\,\right]w}{y^2\left[\,4+(p-1)w\,\right]}.
\end{equation}

\begin{lem}\label{lem5.1}
For each $a>0$, there is a unique solution $G_a\in C^\infty((0,1])$, with $G_a>0$ on $(0,1)$, of the boundary value problem
\begin{equation}\label{eq5.10}
 \left\{\begin{array}{l}
G_a'(y)=-F_a(y,G_a(y))\quad 0<y<1\\
\noalign{\smallskip}
G_a(1)=0,
        \end{array}\right.
\end{equation}
and $G_a$ is continuous and strictly decreasing on $(0,1]$. Moreover, for each $y\in(0,1)$, $G_a(y)$ is a continuous function of $a>0$.
\end{lem}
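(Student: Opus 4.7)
The plan is to treat \eqref{eq5.10} as a backwards-in-$y$ Cauchy problem from $y=1$ and to carry out four steps: local existence/uniqueness near the initial point, a sign and monotonicity argument, an a priori bound that rules out blow-up, and continuous dependence on $a$.

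First I would observe that the right-hand side $-F_a(y,w)$ is jointly $C^\infty$ on the open region $\Omega=\{(y,w):y>0,\ w>-(p-1)/4\}$, and that the initial point $(1,0)$ lies in $\Omega$. The Picard--Lindel\"of theorem then yields a unique $C^\infty$ solution $G_a$ on a maximal subinterval $(y_\ast,1]$ of $(0,1]$ along which the trajectory $(y,G_a(y))$ stays in $\Omega$. From \eqref{eq5.9} one reads off $G_a'(1)=-F_a(1,0)=-2p(a^{-1}+2)<0$, so $G_a>0$ just to the left of $1$.

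Next I would verify by inspection of \eqref{eq5.9} that $F_a(y,w)>0$ whenever $y>0$ and $w\geq 0$, since every term in both numerator and denominator is manifestly positive. Consequently $G_a'<0$ on the set where $G_a\geq 0$. Combined with the previous step this forces $G_a$ to be strictly positive and strictly decreasing throughout $(y_\ast,1)$; in particular the trajectory never approaches the boundary $w=-(p-1)/4$ of $\Omega$.

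The crux is the a priori bound. Separating the contributions to the numerator of $F_a$ by size, and using $4+(p-1)w\geq 4$ when $w$ is small together with $4+(p-1)w\geq (p-1)w$ when $w\geq 1$, I would show that, for some constant $C=C(p,a)$,
\[
0\ \leq\ F_a(y,w)\ \leq\ \frac{C(1+w)}{y^2},\qquad y\in(0,1],\ w\geq 0.
\]
Since $G_a'\leq 0$, this gives the differential inequality $(\log(1+G_a(y)))'\geq -C/y^2$ on $(y_\ast,1]$, and integrating from $y$ to $1$ yields $1+G_a(y)\leq \exp(C(1/y-1))$. Thus $G_a$ is finite on every compact subset of $(0,1]$, which rules out a blow-up at any $y_\ast>0$ and forces $y_\ast=0$. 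I expect this step to be the main obstacle: the $y^{-2}$ singularity of $F_a$ at the origin must be shown to produce only exponential-type growth in $1/y$, not a finite-position blow-up.

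Finally, the regularity $G_a\in C^\infty((0,1])$ is immediate from the $C^\infty$ character of $F_a$ on $\Omega$, and extension through the endpoint $y=1$ to $y$ slightly larger than $1$ by the same local existence argument gives smoothness at $y=1$. Continuity of $a\mapsto G_a(y)$ for each fixed $y\in(0,1]$ follows from the classical smooth dependence of ODE solutions on parameters, together with the fact that the a priori bound above is locally uniform in $a>0$, so trajectories for nearby values of $a$ remain in a common compact subset of $\Omega$ on which $F_a$ is uniformly Lipschitz in $w$.
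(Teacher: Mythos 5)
Your proposal is correct and follows essentially the same route as the paper: a backwards Cauchy problem from the data $G_a(1)=0$, positivity of $F_a(y,w)$ for $w\geq 0$ to force strict monotonicity and positivity, continuation of the solution down to $y=0^+$ and slightly past $y=1$, smoothness by bootstrapping the ODE, and classical continuous dependence on the parameter $a$. The only substantive difference is that you make explicit the non-blow-up step via the linear-growth bound $F_a(y,w)\leq C(1+w)/y^2$ and the resulting Gronwall-type estimate $1+G_a(y)\leq e^{C(1/y-1)}$, whereas the paper disposes of this by citing the Coddington--Levinson continuation theorem after establishing monotonicity; your version is, if anything, the more self-contained of the two.
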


\begin{rk}
When we say $G\in C^\infty((0,1])$ we mean that for some $\delta>0$, $G$ has an extension in $C^\infty((0,1+\delta))$.
\end{rk}
\begin{proof}
It is easy to check that there is some $\delta\in(0,4)$ such that if $w\geq -\delta$, then
\begin{align*}
&4+(p-1)w>0\\
\intertext{and}
&8p+2(3p-2)w+(p-1)w^2>0.
\end{align*}
In particular, for $y>0$ and $w\geq -\delta$,
\begin{equation}\label{eq5.11}
F_a(y,w)=\frac{\frac{8p+2pw}{a}+\left[16p+4(3p-2)w+2(p-1)w^2\right]y}{y^2[\,4+(p-1)w\,]}>0.
\end{equation}
Since $F_a\in C^\infty((0,\infty)\times[-\delta,\infty))$, by the Fundamental Existence/Uniqueness Theorem for ordinary differential equations (Coddington and Levinson (1955), Theorem 3.1 on page 12), for some $\delta_1>0$, there is a unique solution $G\in C^1((1-\delta_1,1+\delta_1))$ to 
\begin{align*}
&G'(y)=-F_a(y,G(y)),\quad y\in(1-\delta_1,1+\delta_1)\\
\noalign{\smallskip}
&G(1)=0.
\end{align*}
Note it is tacit that $G\geq -\delta$ on $(1-\delta_1,1+\delta_1)$. Since
\[
G'(1)=-F_a(1,G(1))=-F_a(1,0)=-\frac{2p(1+2a)}a<0,
\]
there is some $\delta_2\in(0,\delta_1)$ such that $G$ is strictly decreasing on $(1-\delta_2,1]$. Then from \eqref{eq5.11} we see that $G$ can be uniquely extended to $(0,1+\delta_1)$ (see Theorem 1.3 on page 47 in Coddington and Levinson (1955)), to be the solution in $C^1((0,1+\delta_1))$ of
\begin{equation}\label{eq5.12}
 \left\{\begin{array}{l}
G'(y)=-F_a(y,G(y))\quad 0<y<1+\delta_1\\
\noalign{\smallskip}
G(1)=0.
        \end{array}\right.
\end{equation}
Moreover, $G$ is strictly decreasing and continuous on $(0,1]$. Thus $G>0$ on $(0,1)$, and by repeatedly differentiating \eqref{eq5.12}, we see that $G\in C^\infty((0,1+\delta_1))$. Upon setting $G_a=G$, it follows that  \eqref{eq5.10} holds and $G_a$ is continuous and strictly decreasing on $(0,1]$.

Since $F_a(y,w)$ satisfies a Lipschitz condition in $w$ uniformly for $(y,w,a)$ in compact subsets of $(0,\infty)\times[0,\infty)\times(0,\infty)$, by standard theorems for ordinary differential equations involving parameters (Coddington and Levinson (1955), Theorem 7.4 on page 29), for each $y\in (0,1)$, $G_a(y)$ is a continuous function of $a>0$.
\end{proof}

The next order of business is to show $G_a$ decreases as a function of $a>0$.

\begin{lem}\label{lem5.2}
If $0<a_1<a_2$, then $G_{a_2}<G_{a_1}$ on $(0,1)$. Moreover, for each $y\in(0,1)$, $\lim_{a\to 0^+}G_a(y)=\infty$.
\end{lem}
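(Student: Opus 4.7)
The plan is to prove the two statements in sequence: first the monotonicity of $G_a$ in $a$ by an ODE comparison argument, then combine that monotonicity with a direct lower bound read off from the ODE itself to force the blow-up as $a\to 0^+$.

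For monotonicity, the key structural fact is that $F_a(y,w)$ is strictly decreasing in $a$ whenever $w\geq 0$. Rewriting the numerator of \eqref{eq5.9} as $(8p+2pw)/a + 16py + 4(3p-2)yw + 2(p-1)yw^2$ isolates the $1/a$ dependence and gives
$$\frac{\partial F_a}{\partial a}(y,w) = -\frac{8p+2pw}{a^2\,y^2[4+(p-1)w]} < 0 \qquad\text{for }y>0,\ w\geq 0.$$
I would then run the standard comparison: fix $0<a_1<a_2$, set $\phi=G_{a_1}-G_{a_2}$, and note $\phi(1)=0$ together with
$$\phi'(1)= F_{a_2}(1,0)-F_{a_1}(1,0) = 2p\left(\frac{1}{a_2}-\frac{1}{a_1}\right) < 0,$$
so $\phi>0$ in a left-neighborhood of $1$. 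If $\phi$ were not positive throughout $(0,1)$, let $y^*$ be the largest zero of $\phi$ in $(0,1)$; then $\phi(y^*)=0$ with $\phi>0$ on $(y^*,1)$, forcing $\phi'(y^*)\geq 0$. But writing $c=G_{a_1}(y^*)=G_{a_2}(y^*)\geq 0$, the ODE gives
$$\phi'(y^*)=F_{a_2}(y^*,c)-F_{a_1}(y^*,c)<0,$$
a contradiction. Hence $G_{a_2}<G_{a_1}$ on $(0,1)$.

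For the divergence as $a\to 0^+$, fix $y_1\in(0,1)$ and set $L=\lim_{a\to 0^+}G_a(y_1)$, which exists in $(0,\infty]$ by the monotonicity just proved. Assume $L<\infty$ for contradiction. Since $G_a$ is decreasing in $y$ by Lemma \ref{lem5.1}, we have $G_a(y)\leq G_a(y_1)\leq L$ for $y\in[y_1,1]$, uniformly in $a>0$. Integrating the ODE in \eqref{eq5.10} from $y_1$ to $1$ (using $G_a(1)=0$) and retaining only the $8p/a$ contribution in the numerator of $F_a$ yields
$$G_a(y_1)=\int_{y_1}^1 F_a(y,G_a(y))\,dy \;\geq\; \frac{8p}{a\,[4+(p-1)L]}\left(\frac{1}{y_1}-1\right),$$
whose right-hand side tends to $+\infty$ as $a\to 0^+$, contradicting $G_a(y_1)\leq L$. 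The only step requiring genuine thought is the comparison: one must initiate it at the common boundary condition $y=1$ and propagate leftward, exploiting the sign of $\phi'(1)$ to rule out a first crossing. Once that is set up, the rest is a mechanical combination of the sign of $\partial_a F_a$ with the integral representation of $G_a$.
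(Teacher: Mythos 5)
Your proof is correct and follows essentially the same route as the paper: the monotonicity is the same first-crossing comparison anchored at $y=1$ (sign of the derivative difference at the boundary, then a contradiction at the largest zero $y^*$ using that $F_a(y,w)$ is strictly decreasing in $a$ for $w\geq 0$), and the blow-up as $a\to 0^+$ again comes from integrating the ODE over $[y_1,1]$ and exploiting the $8p/a$ term in the numerator of $F_a$. The only cosmetic difference is in the second part, where you extract the explicit lower bound $\frac{8p}{a[4+(p-1)L]}\left(y_1^{-1}-1\right)$ while the paper reaches the same contradiction via Fatou's lemma.
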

\begin{proof}
For typographical simplicity, write $G_{a_i}=G_i$ and $F_{a_i}=F_i$, where $F_a$ is from \eqref{eq5.9}. We use $D_L\,g$ to denote the left derivative of $g$:
\[
D_L\,g(y)=\lim_{h\to 0^-}\frac{g(y+h)-g(y)}h.
\]
Since
\begin{equation}\label{eq5.13}
G_i(y)=\int_y^1F_i(u,G_i(u))\,du,\quad 0<y\leq 1,
\end{equation}
we have that
\[
D_L\,G_i(1)=-F_i(1,0)=-2p\left[\,\frac 1{a_i}+2\,\right].
\]
Consequently,
\[
D_L\,G_2(1)-D_L\,G_1(1)=2p\left[\,\frac 1{a_1}-\frac 1{a_2}\,\right]>0.
\]
Hence for small $\delta>0$,
\begin{equation}\label{eq5.14}
G_2(y)<G_1(y),\quad y\in[1-\delta,1).
\end{equation}

To get a contradiction, assume that for some $y_0\in(0,1-\delta)$ we have
\[
G_2(y_0)=G_1(y_0).
\]
Let $y_*\leq 1-\delta$ be the supremum of all such points $y_0$. Then $G_2(y_*)=G_1(y_*)$ and
\begin{equation}\label{eq5.15}
G_2(y)<G_1(y),\quad y\in(y_*,1).
\end{equation}
On the other hand, by \eqref{eq5.12} and that $G_2(y_*)=G_1(y_*)$, using $D_R$ for the right derivative,
\begin{align*}
D_R\,G_2(y_*)-D_R\,G_1(y_*)&=G_2'(y_*)-G_1'(y_*)\\
&=F_1(y_*,G_1(y_*))-F_2(y_*,G_2(y_*))\\
&=2p\left[\,\frac 1{a_1}-\frac 1{a_2}\,\right]\frac{\left[4+G_1(y_*)\right]}{y_*^2\left[4+(p-1)G_1(y_*)\right]}\\
&>0.
\end{align*}
Hence for small $\delta_1>0$,
\[
G_2>G_1\quad\text{ on }(y_*,y_*+\delta_1].
\]
This contradicts \eqref{eq5.15}. Thus there is no $y_0\in(0,1-\delta)$ for which $G_2(y_0)=G_1(y_0)$ and so by \eqref{eq5.14}, we must have $G_2<G_1$ on $(0,1)$, as claimed.

To see that $\lim_{a\to 0^+}G_a(y)=\infty$ for each $y\in(0,1)$, assume that for some $y_0\in(0,1)$,
\[
\lim_{a\to 0^+}G_a(y_0)=g(y_0)<\infty.
\]
Then since $G_a(y)$ is decreasing in both $a>0$ and $y\in(0,1)$, for any $y\in(y_0,1)$ we have that
\[
\lim_{a\to 0^+}G_a(y)\leq\lim_{a\to 0^+}G_a(y_0)<\infty.
\]
Thus $g(y)=\lim_{a\to 0^+}G_a(y)$ exists as a real number for each $y\in(y_0,1)$. By Fatou's lemma we have that
\begin{align*}
\int_{y_0}^1\liminf_{a\to 0^+}F_a(u,G_a(u))\,du&\leq \liminf_{a\to 0^+}\int_{y_0}^1F_a(u,G_a(u))\,du\\
&=\liminf_{a\to 0^+}G_a(y_0)\qquad\text{(see \eqref{eq5.13}).}
\end{align*}
On the other hand, since $G_a(u)\to g(u)$ for $u\in(y_0,1)$, by \eqref{eq5.9} we have that
\[
\lim_{a\to 0^+}F_a(u,G_a(u))=\infty.\]
Thus
\[
\infty=\liminf_{a\to 0^+}G_a(y_0)=g(y_0)<\infty;
\]
contradiction.
\end{proof}

As a consequence of Lemma \ref{lem5.2}, the function
\begin{equation}\label{eq5.16}
K(y)=\lim_{a\to \infty}G_a(y),\quad 0<y\leq 1
\end{equation}
is well-defined and $K(1)=0$.
\begin{lem}\label{lem5.3}
The function $K$ is decreasing on $(0,1]$ and is positive on $(0,1)$.
\end{lem}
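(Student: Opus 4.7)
The plan is to handle the three aspects of the statement---non-strict monotonicity, positivity, strict decrease---in an order that lets each piece feed into the next. The non-strict monotonicity of $K$ on $(0,1]$ is immediate from Lemma \ref{lem5.1}: for $0 < y_1 < y_2 \leq 1$, every $G_a$ satisfies $G_a(y_1) > G_a(y_2)$, so sending $a \to \infty$ in \eqref{eq5.16} gives $K(y_1) \geq K(y_2)$.

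For positivity, I would exploit the fact that for $w \geq 0$ the numerator of $F_a(y,w)$ in \eqref{eq5.9} is a sum of non-negative terms, one of which is $16py$, producing the universal lower bound $F_a(y,w) \geq 16p/(y[4+(p-1)w])$ valid for all $a > 0$. Fix $y \in (0,1)$ and choose any $y_0 \in (0,y)$ and reference value $a_0 > 0$. Monotonicity of $G_a$ in $y$ (Lemma \ref{lem5.1}) and in $a$ (Lemma \ref{lem5.2}) gives
\[
G_a(u) \leq G_{a_0}(y_0) =: M, \quad u \in [y_0,1],\ a \geq a_0,
\]
so \eqref{eq5.13} yields
\[
G_a(y) = \int_y^1 F_a(u, G_a(u))\, du \geq \frac{16p \log(1/y)}{4+(p-1)M}.
\]
Passing to the limit $a \to \infty$ produces $K(y) \geq 16p\log(1/y)/[4+(p-1)M] > 0$.

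To upgrade non-strict monotonicity to strict decrease (should the statement demand it), I would suppose $K(y_1) = K(y_2) =: c$ for some $0 < y_1 < y_2 \leq 1$. The positivity step forces $y_2 < 1$ and $c > 0$, and then the first step gives $K \equiv c$ on $[y_1,y_2]$. Because $G_a \downarrow K$ monotonically in $a$ on $[y_1, y_2]$ with continuous limit, Dini's theorem yields uniform convergence; combined with the uniform-on-compacts convergence $F_a \to F_\infty$, one may pass to the limit in \eqref{eq5.13} to obtain
\[
0 = K(y) - K(y_2) = \int_y^{y_2} F_\infty(u,c)\, du, \quad y \in [y_1, y_2].
\]
But the same universal lower bound applied to $F_\infty$ shows the right-hand side is strictly positive whenever $y < y_2$, a contradiction.

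The main obstacle is making the final interchange of limit and integral rigorous: the crucial point is that the monotone net $\{G_a\}$ has a continuous limit on $[y_1, y_2]$ so that Dini applies and delivers uniform convergence. Everything else is structural bookkeeping on top of the universal lower bound for $F_a$ established in the positivity step.
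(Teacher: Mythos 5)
Your proof is correct, and it differs from the paper's in a worthwhile way on the positivity step. The monotonicity argument is the same as the paper's: each $G_a$ is decreasing in $y$, so the pointwise limit $K$ is (non-strictly) decreasing, which is all the paper claims or later uses. For positivity the paper argues by contradiction: if $K(y_0)=0$ then monotonicity forces $K\equiv 0$ on $[y_0,1]$, and Fatou's lemma applied to \eqref{eq5.13} gives $\int_{y_0}^1 F_\infty(u,0)\,du\le K(y_0)=0$, impossible since $F_\infty(u,0)=4p/u>0$. You prove positivity directly from the same structural fact (the $16py$ term in the numerator of $F_a$), namely the $a$-independent lower bound $F_a(u,w)\ge 16p/\bigl(u[4+(p-1)w]\bigr)$, combined with the two-sided monotonicity bound $G_a(u)\le G_{a_0}(y_0)=:M$ for $a\ge a_0$, $u\in[y_0,1]$, to get the explicit estimate $K(y)\ge 16p\log(1/y)/[4+(p-1)M]>0$. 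Your route avoids Fatou and yields a quantitative lower bound on $K$; the paper's is slightly shorter because assuming $K\equiv 0$ makes the limiting integrand explicit. Your third step (strict decrease) is not required by the lemma as the paper uses it, but the argument is sound; note that on $[y_1,y_2]$ the bounds $c\le G_a\le M$ already make $F_a(u,G_a(u))$ uniformly bounded, so bounded convergence would replace the Dini/uniform-convergence machinery in the limit interchange.
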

\begin{proof}
Since $G_a(y)$ is strictly decreasing and nonnegative as a function of $y\in(0,1]$, $K$ must be decreasing and nonnegative on $(0,1]$. If for some $y_0\in(0,1)$ we have $K(y)=0$, then by monotonicity and nonnegativity, $K\equiv 0$ on $[y_0,1]$. By Fatou's lemma and \eqref{eq5.13},
\begin{align}\label{eq5.17}
\int_{y_0}^1F_\infty(u,K(u))\,du&=\int_{y_0}^1\lim_{a\to \infty}F_a(u,G_a(u))\,du\\
&\leq\liminf_{a\to\infty}\int_{y_0}^1F_a(u,G_a(u))\,du\notag\\
&=\liminf_{a\to\infty}G_a(y_0)\notag\\
&=K(y_0)\notag\\
&=0.\notag
\end{align}
On the other hand, for each $u\in[y_0,1]$, we have that
\begin{align*}
F_\infty(u,K(u))&=\frac{16p+4(3p-2)K(u)+2(p-1)K^2(u)}{u[\,4+(p-1)K(u)\,]}\\
&=\frac{4p}u>0,
\end{align*}
since $K\equiv 0$, on $[y_0,1]$. Thus the left hand side of \eqref{eq5.17} must be positive. This contradiction yields that we must have $K>0$ on $(0,1)$.
\end{proof}
Now we show that we can take the limit as $a\to\infty$ in \eqref{eq5.10}.
\begin{lem}\label{lem5.4}
The function $K$ is continuous on $(0,1)$ and satisfies
\begin{align*}
K'(y)&=\frac{16p+4(3p-2)K(y)+2(p-1)K^2(y)}{y[\,4+(p-1)K(y)\,]}\\
&=F_\infty(y,K(y)),\quad 0<y<1.
\end{align*}
\end{lem}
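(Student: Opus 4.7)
The plan is to transfer the integral form of the ODE from $G_a$ to $K$ using dominated convergence, then recover the differential equation via the fundamental theorem of calculus. Integrating \eqref{eq5.10} with the boundary condition $G_a(1)=0$ gives
\[
G_a(y)=\int_y^1 F_a(u,G_a(u))\,du,\quad 0<y\le 1.
\]
I would show that the analogous identity holds for $K$ with $F_\infty$ in place of $F_a$, and then differentiate.

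For the passage to the limit, fix $0<y_0<1$ and restrict attention to $u\in[y_0,1]$ and $a\ge 1$. By Lemma \ref{lem5.2}, $0\le G_a(u)\le G_1(u)\le M:=\sup_{[y_0,1]}G_1<\infty$. Inspecting \eqref{eq5.9}, $F_a$ depends on $a$ only through the two $1/a$ terms in the numerator, so $F_a\to F_\infty$ uniformly on compact subsets of $(0,\infty)\times[0,\infty)$, and the family $\{F_a(u,w):a\ge 1,\,u\in[y_0,1],\,0\le w\le M\}$ is uniformly bounded (the denominator is bounded below by $4y_0^2>0$). Combined with the pointwise convergence $G_a(u)\to K(u)$ from \eqref{eq5.16} and continuity of $F_\infty(u,\cdot)$, this gives $F_a(u,G_a(u))\to F_\infty(u,K(u))$ pointwise on $[y_0,1]$, so dominated convergence yields
\[
K(y)=\int_y^1 F_\infty(u,K(u))\,du,\quad 0<y<1.
\]

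This integral representation immediately forces $K$ to be locally Lipschitz on $(0,1)$: on any $[y_0,1]$ the bound $K\le M$ keeps $F_\infty(\cdot,K(\cdot))$ bounded, so the integral is Lipschitz in the upper limit. Hence $K$ is continuous on $(0,1)$; once continuity is in hand, $u\mapsto F_\infty(u,K(u))$ is continuous there, and the fundamental theorem of calculus delivers $K\in C^1((0,1))$ with $K'(y)=-F_\infty(y,K(y))$. A direct simplification of \eqref{eq5.9} at $1/a=0$ reduces $F_\infty(y,K(y))$ to the rational expression written in the lemma.

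The only nontrivial point is the uniform bound required for dominated convergence, which is supplied for free by the monotonicity $a\mapsto G_a$ of Lemma \ref{lem5.2} (taking $G_1$ as dominating function) together with the fact that the $a$-dependence of $F_a$ is confined to benign $1/a$ terms. Beyond that, the argument is routine, and I do not anticipate any real obstacle.
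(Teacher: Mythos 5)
Your proposal is correct, and it reaches the conclusion by a genuinely different (and somewhat lighter) route than the paper. Both arguments start from the integral identity $G_a(y)=\int_y^1F_a(u,G_a(u))\,du$ and finish by differentiating the limiting integral equation, but they diverge at the key step of passing to the limit $a\to\infty$. The paper first proves that $G_a\to K$ \emph{uniformly} on compact subsets of $(0,1)$: it bounds $N_a=\sup|F_a-F_\infty|$, establishes a Lipschitz bound for $F_\infty$ in $w$, and runs a Gronwall argument to show $\{G_{a_n}\}$ is uniformly Cauchy; continuity of $K$ then comes for free as a uniform limit of continuous functions, and the limit is taken inside the integral by uniform convergence. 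You instead invoke dominated convergence, with the domination supplied by the monotonicity $G_a\le G_1$ from Lemma \ref{lem5.2} and the lower bound $u^2[4+(p-1)w]\ge 4y_0^2$ on the denominator; this needs only the pointwise convergence $G_a\to K$ already built into \eqref{eq5.16}, and you then extract continuity of $K$ \emph{a posteriori} from the Lipschitz dependence of $\int_y^1F_\infty(u,K(u))\,du$ on its lower limit. Your route avoids Gronwall entirely and is the more elementary of the two; what the paper's argument buys in exchange is the stronger conclusion of locally uniform convergence of $G_a$ to $K$, though that extra strength is not actually exploited elsewhere. One cosmetic remark: your conclusion $K'(y)=-F_\infty(y,K(y))$ carries the minus sign, which agrees with the paper's own proof (and with the fact that $K$ is decreasing while $F_\infty>0$); the displayed statement of the lemma omits the sign and appears to contain a typo.
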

\begin{proof}
By \eqref{eq5.16}, Lemma \ref{lem5.2} and Lemma \ref{lem5.3}, for sufficiently small $\delta>0$, if $y\in(0,1-\delta)$, then
\[
G_a(y)\geq K(y)\geq K(1-\delta)>0,\quad a>0.
\]
Notice that for $u\in[\delta,1-\delta]$ and $w\geq 0$,
\begin{align*}
0&\leq F_a(u,w)-F_\infty(u,w)\\
&=\frac{2p(4+w)}{au^2[\,4+(p-1)w\,]}\\
&\leq\frac{2p}{a\delta^2}\left[1+\frac 1{p-1}\right].
\end{align*}
This implies that as $a\to\infty$, $F_a(u,w)\to F_\infty(u,w)$ uniformly for $u\in[\delta,1-\delta]$ and $w\geq 0$. In particular, for

\[
N_a=\sup\{|F_a(u,w)-F_\infty(u,w)|:\delta\leq u\leq 1-\delta,\ w\geq K(1-\delta)\}
\]
we have
\[
N_a\to 0\quad \text{as }a\to\infty.
\]
Since the $w$-partial of $F_\infty(y,w)$ is of the form
\[
\frac{f_1(y)w^2+f_2(y)w+f_3(y)}{y^2[\,4+(p-1)w\,]^2},
\]
where $f_1$--$f_3$ are bounded on $[0,1]$, we have that
\[
M:=\sup\left\{\left|\frac{\partial F_\infty}{\partial w}(y,w)\right|:\delta\leq y\leq 1-\delta,\quad w\geq K(1-\delta)\right\}<\infty.
\]
Then for any $u\in[\delta,1-\delta]$ and $w_1,w_2\geq K(1-\delta)$,
\[
\left|F_\infty(u,w_1)-F_\infty(u,w_2)\right|\leq M|w_1-w_2|,
\]
and consequently,
\begin{align*}
\left|F_{a_1}(u,w_1)-F_{a_2}(u,w_2)\right|&\leq\left|F_{a_1}(u,w_1)-F_{\infty}(u,w_1)\right|\\
&\qquad +\left|F_{\infty}(u,w_1)-F_{\infty}(u,w_2)\right|+\left|F_{\infty}(u,w_2)-F_{a_2}(u,w_2)\right|\\
&\leq N_{a_1}+M|w_1-w_2|+N_{a_2}.
\end{align*}
Hence for any $a_1,a_2>1$ and $y\in[\delta,1-\delta]$, by \eqref{eq5.13} we have that
\begin{align*}
\left|G_{a_1}(y)-G_{a_2}(y)\right|&=\left|G_{a_1}(\delta)-\int_\delta^yF_{a_1}(u,G_{a_1}(u))\,du-G_{a_2}(\delta)+\int_\delta^yF_{a_2}(u,G_{a_2}(u))\,du\right|\\
&\leq \left|G_{a_1}(\delta)-G_{a_2}(\delta)\right|+\int_\delta^{y}\left|F_{a_1}(u,G_{a_1}(u))-F_{a_2}(u,G_{a_2}(u))\right|\,du\\
&\leq \left|G_{a_1}(\delta)-G_{a_2}(\delta)\right|+N_{a_1}+M\int_\delta^{y}\left|G_{a_1}(u)-G_{a_2}(u)\right|\,du+N_{a_2}.
\end{align*}
By Gronwall's inequality,
\[
\sup_{\delta\leq y\leq 1-\delta}\left|G_{a_1}(y)-G_{a_2}(y)\right|\leq \left[\left|G_{a_1}(\delta)-G_{a_2}(\delta)\right|+N_{a_1}+N_{a_2}\right]e^{M(1-2\delta)}.
\]
Since $G_a(\delta)\to K(\delta)$ as $a\to\infty$, it follows that for any sequence $a_n\to\infty$, $G_{a_n}$ is uniformly Cauchy on $[\delta,1-\delta]$. Since $\delta>0$ was arbitrary, we get that $K$ is continuous on $(0,1)$.

Then by the uniform convergence of $G_a$ to $K$ on compact subsets of $(0,1)$ as $a\to\infty$, we can take the limit inside the integral in the expression
\[
G_a(y)=G_a(\delta)-\int_\delta^yF_a(u,G_a(u))\,du,\quad\delta\leq y\leq 1-\delta
\]
to get
\[
K(y)=K(\delta)-\int_\delta^yF_\infty(u,K(u))\,du,\quad\delta\leq y\leq 1-\delta.
\]
Hence, since $\delta>0$ was arbitrary,
\[
K'(y)=-F_\infty(y,K(y)),\quad 0<y<1.
\]
\end{proof}

\begin{lem}\label{lem5.5}
The function $K$ is continuous on $(0,1]$.
\end{lem}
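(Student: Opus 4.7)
The plan is to leverage what is already in place. By Lemma~\ref{lem5.4}, $K$ is continuous on the open interval $(0,1)$, so the only thing that requires proof is continuity at the right endpoint $y=1$, i.e., that $\lim_{y\to 1^-}K(y)=K(1)=0$. Note that $K(1)=\lim_{a\to\infty}G_a(1)=0$ by \eqref{eq5.16} and the boundary condition in \eqref{eq5.10}, and that $K(y)\geq 0$ on $(0,1]$ because every $G_a$ is positive there by Lemma~\ref{lem5.1}.

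The key observation is the monotonicity in the parameter $a$ given by Lemma~\ref{lem5.2}: the family $\{G_a\}_{a>0}$ is monotone decreasing in $a$, so
\[
K(y)=\lim_{a\to\infty}G_a(y)=\inf_{a>0}G_a(y)\leq G_a(y),\quad y\in(0,1],
\]
for each fixed $a>0$. This sandwiches $K$ between two functions, both of which vanish at $1$: namely, $0\leq K(y)\leq G_a(y)$.

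Now I would fix any $a>0$ and use the regularity afforded by Lemma~\ref{lem5.1}: since $G_a\in C^\infty((0,1])$ with $G_a(1)=0$, we have $\lim_{y\to 1^-}G_a(y)=0$. The sandwich then yields
\[
0\leq\liminf_{y\to 1^-}K(y)\leq\limsup_{y\to 1^-}K(y)\leq\lim_{y\to 1^-}G_a(y)=0,
\]
so $\lim_{y\to 1^-}K(y)=0=K(1)$, which combined with Lemma~\ref{lem5.4} gives continuity of $K$ on all of $(0,1]$.

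There is really no main obstacle here: the heavy lifting was already done in establishing Lemma~\ref{lem5.2} (monotonicity in $a$) and Lemma~\ref{lem5.1} (continuity of each $G_a$ up to $1$ with boundary value $0$). Lemma~\ref{lem5.5} is essentially a short sandwich argument cashing in on those facts.
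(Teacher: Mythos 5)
Your proof is correct and is essentially the paper's own argument: the paper also reduces to showing $\lim_{y\to 1^-}K(y)=0$ and sandwiches $0\leq K(y)\leq G_a(y)$ using the monotonicity in $a$ from Lemma \ref{lem5.2} together with $G_a(1)=0$ (the paper simply fixes $a=1$ and phrases it with an explicit $\vp$--$\delta$). No substantive difference.
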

\begin{proof}
By Lemma \ref{lem5.4}, it suffices to show that $\lim_{y\to 1^-}K(y)=0$. Given $\vp>0$, choose $\delta>0$ such that
\[
G_1(y)<\vp,\quad y\in[1-\delta,1].
\]
Then
\[
0\leq G_a(y)<\vp\quad\text{for all $y\in[1-\delta,1]$ and $a\geq 1$}.
\]
Let $a\to\infty$ to get that
\[
0\leq K(y)\leq\vp,\quad y\in[1-\delta,1].
\]
Thus $K(y)\to 0$ as $y\to 1^-$, as desired.
\end{proof}
To prove integrability properties of $K$ on $(0,1)$, we will make use of the following result.
\begin{lem}\label{lem5.6}
We have the following limits:

\bigskip
a) $\displaystyle\lim_{y\to 1^-}\frac{K(y)}{1-y}=4p$.

\bigskip
b) $\displaystyle\lim_{y\to 0^+}\frac{\log K(y)}{-\log y}=2$.

\end{lem}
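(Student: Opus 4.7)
The plan is to use the ODE from Lemma \ref{lem5.4} together with the continuity of $K$ at $1$ from Lemma \ref{lem5.5}. Part (a) is essentially immediate: by Lemmas \ref{lem5.4} and \ref{lem5.5}, $K\in C((0,1])\cap C^1((0,1))$ with $K(1)=0$ and $K'(y) = -F_\infty(y, K(y))$. Since $F_\infty$ is continuous at $(1,0)$ and $K(y)\to 0$ as $y\to 1^-$, we have $\lim_{y\to 1^-} K'(y) = -F_\infty(1,0) = -16p/4 = -4p$. Applying the Mean Value Theorem on $[y,1]$ furnishes $\xi_y\in(y,1)$ with $K(y)/(1-y) = -K'(\xi_y)$, and letting $y\to 1^-$ (so $\xi_y\to 1^-$) produces the limit $4p$.

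For part (b), my plan is to first prove $K(y)\to\infty$ as $y\to 0^+$ and then extract the rate via a logarithmic derivative. By Lemma \ref{lem5.3}, $K$ is positive and decreasing on $(0,1)$, so $K(y)\to L\in(0,\infty]$ as $y\to 0^+$. Suppose for contradiction that $L<\infty$. Since $p>1$, for $w\geq 0$ the numerator of $F_\infty(u,w)$ is at least $16p$, while the denominator satisfies $u[4+(p-1)w]\leq u[4+(p-1)L]$. Hence
$$F_\infty(u,K(u)) \;\geq\; \frac{c}{u}, \qquad c := \frac{16p}{4+(p-1)L} > 0,$$
and integrating the ODE on $[y,1/2]$ gives
$$K(y) \;=\; K(1/2) + \int_y^{1/2} F_\infty(u,K(u))\,du \;\geq\; K(1/2) + c\log\frac{1}{2y} \;\longrightarrow\; \infty,$$
contradicting $K(y)\leq L$. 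Thus $K(y)\to\infty$.

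Once divergence is in hand, we write
$$\frac{d}{dy}\log K(y) \;=\; -\frac{r(y)}{y}, \qquad r(y) := \frac{16p + 4(3p-2)K(y) + 2(p-1)K(y)^2}{K(y)\,[4+(p-1)K(y)]}.$$
Dividing numerator and denominator of $r(y)$ by $K(y)^2$ and using $K(y)\to\infty$ shows $r(y)\to 2(p-1)/(p-1) = 2$ as $y\to 0^+$. Integrating from $y$ to $1/2$ yields
$$\log K(y) \;=\; \log K(1/2) + \int_y^{1/2}\frac{r(u)}{u}\,du,$$
and since $r(u)\to 2>0$ the integral diverges to $+\infty$. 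Both $\log K(y)$ and $-\log y$ therefore tend to $+\infty$, and L'Hopital's rule (the derivative ratio is $(-r(y)/y)/(-1/y) = r(y)$) gives $\log K(y)/(-\log y)\to 2$. The main obstacle is the divergence step $K(y)\to\infty$; the rate analysis is then a routine L'Hopital computation, and part (a) is a direct consequence of the Mean Value Theorem applied to the ODE.
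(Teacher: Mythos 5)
Your proposal is correct and follows essentially the same route as the paper: part (a) by computing $\lim_{y\to 1^-}K'(y)=-F_\infty(1,0)=-4p$ (the paper uses L'Hopital where you use the Mean Value Theorem, an immaterial difference), and part (b) by first ruling out a finite limit of $K$ at $0^+$ via the bound $F_\infty(u,K(u))\geq 16p/(u[4+(p-1)L])$ and then applying L'Hopital to the logarithmic derivative, exactly as in the paper.
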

\begin{proof}
a) Since $K(y)\to 0$ as $y\to 1^-$, we have 
\begin{align*}
\lim_{y\to 1^-}\frac{K(y)}{1-y}&=\lim_{y\to 1^-}\frac{K'(y)}{-1}\\
&=\lim_{y\to 1^-}F_\infty(y,K(y))\\
&=4p.
\end{align*}

\noindent b) Now for $y>0$ and $w\geq 0$,
\[
F_\infty(y,w)\geq\frac{16p}{y\left[4+(p-1)w\right]}.
\]
To get a contradiction, assume
\[
\lim_{y\to 0^+}K(y)=L<\infty.
\]
Then since $K$ is decreasing on $(0,1)$, we have $K(u)\leq L$ for $u\in(0,1)$ and consequently for $y>0$,
\begin{align*}
K(y)&=\int_y^1F_\infty(u,K(u))\,du\\
&\geq\int_y^1\frac{16p}{u\left[4+(p-1)L\right]}\,du\\
&=-\frac{16p}{4+(p-1)L}\log y.
\end{align*}
Letting $y\to 0^+$, this yields $\infty>L=\lim_{y\to 0^+}K(y)=\infty$; contradiction.

Thus $K(y)\to\infty$ as $y\to 0^+$ and we have that
\begin{align*}
\lim_{y\to 0^+}\frac{\log K(y)}{-\log y}&=\lim_{y\to 0^+}\frac{K'(y)/K(y)}{-1/y}\\
&=\lim_{y\to 0^+}\frac{y}{K(y)}F_\infty(y,K(y))\\
&=\lim_{y\to 0^+}\frac 1{K(y)}\frac{16p+4(3p-2)K(y)+2(p-1)K(y)^2}{4+(p-1)K(y)}\\
&=2.
\end{align*}
\end{proof}
As an immediate consequence of the lemma, we get the following corollary.
\begin{cor}\label{cor5.7}
The function $y^{-1}K(y)^{-1/2}$ is integrable on $(0,1)$.\hfill $\square$
\end{cor}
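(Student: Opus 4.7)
\textbf{Proof plan for Corollary \ref{cor5.7}.} The idea is to split the integral $\int_0^1 y^{-1}K(y)^{-1/2}\,dy$ into three pieces: a neighborhood of $0$, a compact subinterval of $(0,1)$ bounded away from both endpoints, and a neighborhood of $1$. Each piece will be handled by a direct asymptotic estimate, together with the positivity and continuity of $K$ established in Lemmas \ref{lem5.3} and \ref{lem5.5}.

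For the piece near $y=1$, I would use Lemma \ref{lem5.6}(a): since $K(y)/(1-y) \to 4p$ as $y \to 1^-$, we may choose $\delta_1 \in (0,1/2)$ so that $K(y) \geq 2p(1-y)$ on $[1-\delta_1, 1)$. Then
\[
\int_{1-\delta_1}^1 \frac{dy}{y\,\sqrt{K(y)}} \leq \frac{1}{1-\delta_1}\cdot\frac{1}{\sqrt{2p}}\int_{1-\delta_1}^1 \frac{dy}{\sqrt{1-y}} < \infty.
\]

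For the piece near $y=0$, I would use Lemma \ref{lem5.6}(b): fix $\delta = 1$ (say), so that there exists $\delta_2 \in (0,1/2)$ with
\[
\frac{\log K(y)}{-\log y} \geq \frac{3}{2}\quad\text{for }y \in (0,\delta_2].
\]
This gives $K(y) \geq y^{-3/2}$ on $(0,\delta_2]$, hence $K(y)^{-1/2} \leq y^{3/4}$ there, and so
\[
\int_0^{\delta_2} \frac{dy}{y\,\sqrt{K(y)}} \leq \int_0^{\delta_2} y^{-1/4}\,dy < \infty.
\]

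Finally, on the compact middle interval $[\delta_2, 1-\delta_1]$, the function $K$ is continuous and strictly positive by Lemmas \ref{lem5.3}--\ref{lem5.5}, so $y^{-1}K(y)^{-1/2}$ is continuous and hence bounded there, yielding a finite integral. Summing the three finite contributions proves the corollary. No step here is subtle; the whole result is essentially a packaging of the limits in Lemma \ref{lem5.6} into integrability statements, with the near-$0$ estimate being the only piece that requires a choice of margin $\delta$ in the limit.
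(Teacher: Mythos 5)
Your proof is correct and is exactly the argument the paper intends: the paper states Corollary \ref{cor5.7} as an ``immediate consequence'' of Lemma \ref{lem5.6} with no written details, and your three-piece decomposition (endpoint asymptotics from Lemma \ref{lem5.6} plus positivity and continuity of $K$ on compact subintervals) is the standard way to fill that in. Nothing further is needed.
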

For each $a>0$, define
\begin{equation}\label{eq5.18}
H_a(y)=y^2G_a(y/a),\quad 0<y\leq a.
\end{equation}
Then $H_a$ is continuous on $(0,a]$, $H_a>0$ on $(0,a)$ and $H_a(a)=0$. Moreover, $H_a$ satisfies
\begin{equation}\label{eq5.19}
H_a'(y)=-\frac{8py^2[\,1+2y\,]+[\,2p+4(3p-4)y\,]H_a(y)}{4y^2+(p-1)H_a(y)},\quad 0<y<a.
\end{equation}
\begin{lem}\label{lem5.8}
The function $H_a$ is positive on $[0,a)$ and in $C^\infty([0,a])$.
\end{lem}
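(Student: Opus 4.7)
The plan is to work with the ODE \eqref{eq5.19} directly and extend $H_a$ smoothly across the apparent singularity at $y=0$ by first identifying its limit there and then bootstrapping regularity from the ODE itself. Smoothness of $H_a$ on $(0,a]$ and positivity of $H_a$ on $(0,a)$ are immediate from the definition $H_a(y)=y^2G_a(y/a)$ together with Lemma \ref{lem5.1}: $G_a$ extends to $C^\infty$ past $y/a=1$ and is strictly positive on $(0,1)$. So the entire issue is the behavior of $H_a$ at $y=0$.

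Next I would use \eqref{eq5.19} to show that $H_a$ is strictly decreasing on $(0,a)$. Since $H_a>0$ there, the denominator $4y^2+(p-1)H_a$ is positive; the coefficient of $H_a$ in the numerator equals $2p>0$ at $y=0$ and its sign on $(0,a)$ is easy to track, so the numerator is positive as well, giving $H_a'<0$. Consequently the monotone limit $c:=\lim_{y\to 0^+}H_a(y)$ exists in $(0,\infty]$.

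The main obstacle is ruling out $c=+\infty$. If $c=\infty$ then for $y$ in some right neighborhood of $0$ we have $H_a(y)\geq 1$, and the elementary bound
\[
|H_a'(y)|\leq\frac{8py^2(1+2y)}{(p-1)H_a(y)}+\frac{2p+4|3p-4|y}{p-1}
\]
obtained from \eqref{eq5.19} (using $4y^2+(p-1)H_a\geq (p-1)H_a$ in the denominator) shows $|H_a'|$ is bounded by a constant on that neighborhood. Then $H_a$ is Lipschitz there, hence bounded, contradicting $c=\infty$. So $c<\infty$, and by monotonicity $c>0$; thus $H_a$ extends continuously to $[0,a]$ with $H_a(0)=c>0$, giving the positivity claim on $[0,a)$.

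For $C^\infty$ regularity at the endpoint $y=0$, I would rewrite \eqref{eq5.19} as $H_a'=R(y,H_a)$, where $R$ is the rational function on its right-hand side. Because $H_a(0)=c>0$, the denominator $4y^2+(p-1)H$ stays bounded away from $0$ in a neighborhood of $(0,c)$, so $R$ is real-analytic there. A mean value theorem argument applied to $H_a\in C([0,a])$ gives $\lim_{y\to 0^+}H_a'(y)=R(0,c)=-2p/(p-1)$, so $H_a\in C^1([0,a])$. Differentiating the identity $H_a'=R(y,H_a)$ inductively then expresses each $H_a^{(k)}$ as a continuous function of $y,H_a,\ldots,H_a^{(k-1)}$ near $(0,c)$, yielding $H_a\in C^\infty([0,a])$ and completing the proof.
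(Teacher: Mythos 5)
Your argument is correct and follows essentially the same route as the paper: smoothness and positivity away from $0$ come from Lemma \ref{lem5.1} and the definition \eqref{eq5.18}, monotonicity of $H_a$ near $0$ gives a limit $c\in(0,\infty]$, a bound on $H_a'$ rules out $c=\infty$ (the paper computes $\lim_{y\to 0^+}H_a'(y)=-2p/(p-1)$ and integrates, while you bound $|H_a'|$ directly; the two are interchangeable), and regularity at $0$ follows by bootstrapping the ODE (the paper instead invokes the Coddington--Levinson extension theorem to continue the solution to the left of $0$). One small caution: your assertion that $H_a'<0$ on all of $(0,a)$ is not actually justified when $3p-4<0$, since the coefficient $2p+4(3p-4)y$ can change sign on $(0,a)$, but this is harmless because only the monotonicity near $y=0$ is needed to conclude that the limit exists.
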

\begin{proof}
By Lemma \ref{lem5.1}, $G_a\in C^\infty((0,1])$, so we have $H_a\in C^\infty((0,a])$. 

Now if $y>0$ is very small, $2p+4(3p-4)y>0$. Hence $H_a'(y)<0$ for small $y>0$, and we have that $H_a(y)$ is increasing as $y$ decreases to $0$. Thus
\[
L:=\lim_{y\to 0^+}H_a(y)
\]
exists as an extended real number in $(0,\infty]$.

If $L=\infty$, then
\[
\lim_{y\to 0^+}H'_a(y)=-\frac{2p}{p-1}.
\]
Since
\[
H_a(y)-\int_\vp^yH'_a(u)\,du=H_a(\vp)
\]
for $0<\vp<y$, we get a contradiction when $\vp\to 0^+$; thus we must have $L<\infty$. Then by an extension theorem in Coddington and Levinson ((1955), Theorem 1.3 on page 47), for some $\delta>0$ the function $H_a(y)$ can be uniquely extended to be the solution in $C^1((-\delta,a))$ of \eqref{eq5.19} satisfying the initial condition $H_a(a)=0$. Moreover, by making $\delta$ smaller if necessary, $H_a>0$ on $(-\delta,a)$ and so by repeatedly differentiating \eqref{eq5.19}, we get that $H_a\in C^\infty((-\delta,a))$.
\end{proof}

Since $G_a\geq K$ on $(0,1)$, by Corollary \ref{cor5.7}, $(H_a(y))^{-1/2}=y^{-1}(G_a(y/a))^{-1/2}$ is integrable on $(0,a)$. Thus the function
\begin{equation}\label{eq5.20}
g(t)=\int_t^a\frac{dy}{\sqrt{H_a(y)}},\quad t\in[0,a]
\end{equation}
is well-defined and continuous on $[\,0,a\,]$. Furthermore, since $g$ is strictly decreasing on $[\,0,a\,]$, it has a continuous and strictly decreasing inverse that we denote by $y(\theta)$. Thus setting
\begin{equation}\label{eq5.21}
\theta_a=g(0),
\end{equation}
we have that the domain of $y(\theta)$ is $[\,0,\theta_a\,]$. In particular,
\begin{equation}\label{eq5.22}
\theta=\int_{y(\theta)}^a\frac{dw}{\sqrt{H_a(w)}},\quad \theta\in[\,0,\theta_a\,].
\end{equation}
From this we get
\begin{equation}\label{eq5.23}
 \left\{\begin{array}{l}
y'(\theta)=-\sqrt{H_a(y(\theta))},\quad 0<\theta<\theta_a\\
\noalign{\smallskip}
y(\theta_a)=0,\\
\noalign{\smallskip}
y(0)=a\\
\noalign{\smallskip}
y>0\text{ on }[\,0,\theta_a).
        \end{array}\right.
\end{equation}
Now extend $y(\theta)$ to $[\,-\theta_a,\theta_a\,]$ by
\[
y(\theta)=y(-\theta),\quad\theta\in[\,-\theta_a,0).
\]
\begin{lem}\label{lem5.9}
The extended function $y(\theta)$ is in $C^3([-\theta_a,\theta_a])$ and satisfies the equation
\[
4y^2\left[\,1+2y+\tfrac 1py''\,\right]+(y')^2\left[\,1+\tfrac{2(3p-4)}p\,y+\tfrac{p-1}p\,y''\,\right]=0
\]
on $(-\theta_a,\theta_a)$. Moreover, $y>0$ on $(-\theta_a,\theta_a)$ and $y(\pm\theta_a)=0$.
\end{lem}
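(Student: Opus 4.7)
The plan is to establish the ODE first on the open interval $(0,\theta_a)$ using the explicit representation $y'=-\sqrt{H_a(y)}$ from \eqref{eq5.23}, then extend by evenness to $(-\theta_a,0)$, and finally verify $C^3$ regularity across the interior singular point $\theta=0$, where $H_a(y(0))=H_a(a)=0$ causes the formula for $y'$ to degenerate.

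On $(0,\theta_a)$, square \eqref{eq5.23} to get $(y')^2=H_a(y)$, and differentiate to obtain $2y'y''=H_a'(y)y'$. Since $H_a>0$ on $(0,a)$ we have $y'<0$ there and may divide, yielding $y''=\tfrac12 H_a'(y)$. Substituting $(y')^2=H_a$ and $y''=\tfrac12 H_a'$ into the left-hand side of the claimed ODE and multiplying through by $2p$ produces
\[
H_a'(y)\bigl[\,4y^2+(p-1)H_a\,\bigr]+8py^2(1+2y)+\bigl[\,2p+4(3p-4)y\,\bigr]H_a=0,
\]
which is exactly \eqref{eq5.19}. Hence the ODE holds on $(0,\theta_a)$. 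The extended function $y$ is even, so $y$, $y''$ and $(y')^2$ are even while $y'$ is odd; since the ODE depends only on the even quantities, it also holds on $(-\theta_a,0)$. The properties $y>0$ on $(-\theta_a,\theta_a)$ and $y(\pm\theta_a)=0$ are immediate from \eqref{eq5.23} and the reflection.

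The main obstacle is regularity at $\theta=0$. Smoothness on $(0,\theta_a]$ is standard once one notes that $H_a\in C^\infty([0,a])$ (by Lemma~\ref{lem5.8}) and $H_a(0)>0$, so $\sqrt{H_a}$ is smooth away from $y=a$ and the ODE $y'=-\sqrt{H_a(y)}$ produces a smooth $y$ on every compact subinterval of $(0,\theta_a]$. To handle $\theta=0$, evaluate \eqref{eq5.19} at $y=a$ (using $H_a(a)=0$) to get $H_a'(a)=-2p(1+2a)<0$, so by Lemma~\ref{lem5.8} one can factor
\[
H_a(y)=(a-y)\,h(y),\qquad h\in C^\infty,\ h(a)=2p(1+2a)>0,
\]
in a neighborhood of $y=a$. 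Make the substitution $v=\sqrt{a-y}$. Then $v\geq 0$, $v(0)=0$, and the identity $2vv'=-y'=v\sqrt{h(a-v^2)}$ gives, after dividing by $v$,
\[
v'(\theta)=\tfrac12\sqrt{h(a-v(\theta)^2)},\qquad v(0)=0,
\]
a genuinely smooth ODE near $v=0$. Its unique solution $\hat v$ extends smoothly to both sides of $\theta=0$, and by uniqueness combined with the invariance of the ODE under $(\theta,v)\mapsto(-\theta,-v)$, $\hat v$ is odd. Therefore $\hat v^{\,2}$ is a smooth even function, so $\hat y:=a-\hat v^{\,2}$ is a smooth even function that agrees with $y$ in a right neighborhood of $0$ and hence, by evenness, equals the reflected extension of $y$ on a full neighborhood of $0$. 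This gives $y\in C^\infty$ across $\theta=0$, and in particular $y\in C^3([-\theta_a,\theta_a])$.
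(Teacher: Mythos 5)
Your proof is correct, and for the one genuinely delicate step---regularity across $\theta=0$---you take a different route from the paper. The first part (deriving the ODE on $(0,\theta_a)$ from $(y')^2=H_a(y)$ and $y''=\tfrac12H_a'(y)$, extending by evenness since the equation involves only $y$, $(y')^2$, $y''$, and reading off $y>0$ and $y(\pm\theta_a)=0$ from \eqref{eq5.23}) matches the paper. At $\theta=0$ the paper argues by brute force: it computes the one-sided limits of $y'$, $y''$ and $y^{(3)}$ as $\theta\to 0^{\pm}$ from the formulas $y'=-\sqrt{H_a(y)}$, $y''=\tfrac12H_a'(y)$, $y^{(3)}=\tfrac12H_a''(y)\,y'$, checks they agree (namely $0$, $-p(1+2a)$, $0$), and invokes the elementary fact that matching one-sided limits of a derivative force continuous differentiability. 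You instead desingularize: factoring $H_a(y)=(a-y)h(y)$ with $h(a)=-H_a'(a)=2p(1+2a)>0$ and substituting $v=\sqrt{a-y}$ turns the degenerate equation into the nonsingular IVP $v'=\tfrac12\sqrt{h(a-v^2)}$, $v(0)=0$, whose unique solution is odd by the symmetry $(\theta,v)\mapsto(-\theta,-v)$, so $y=a-v^2$ is even and $C^\infty$ through $0$. Your argument is structurally cleaner, yields $C^\infty$ rather than just $C^3$ at the origin, and would generalize without recomputing higher derivatives; the paper's is more elementary and avoids the change of variables. Two small points you should make explicit: (i) identifying your $\hat v$ with the $v$ built from the original $y$ requires noting that $v'(\theta)\to\tfrac12\sqrt{h(a)}$ as $\theta\to 0^+$, so $v$ is a genuine $C^1$ solution on $[0,\delta)$ and the uniqueness theorem applies; (ii) smoothness of $v\mapsto h(a-v^2)$ at $v=0$ uses that $H_a$ (hence $h$) extends smoothly slightly past $y=a$, which is available because $G_a\in C^\infty((0,1])$ means an extension beyond $1$ in the sense of the remark after Lemma \ref{lem5.1}. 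Neither is a real gap.
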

\begin{proof} By \eqref{eq5.23}, $y'=-\sqrt{H_a(y)}$ on $\left(0,\theta_a\right)$. Since $H_a$ satisfies \eqref{eq5.19}, it is a routine matter to check that $y$ satisfies the indicated differential equation on $\left(0,\theta_a\right)$. Then using that $y'(\theta)=-y'(-\theta)$ for $\theta\in\left(-\theta_a,0\right)$, it follows that $y$ also solves the differential equation on $\left(-\theta_a,0\right)$.

By \eqref{eq5.23}, $y>0$ on $(-\theta_a,\theta_a)$ with $y(\pm\theta_a)=0$. Since $y((0,\theta_a])=[0,a)$ and $H_a>0$ on $[0,a)$, by Lemma \ref{lem5.8} we can repeatedly differentiate the differential equation in \eqref{eq5.23} to see that $y\in C^\infty((0,\theta_a])$. Thus $y\in C^\infty([-\theta_a,0)\cup (0,\theta_a])$. 

All that remains is to show that $y$ is three times continuously differentiable at $0$. We make use of the following fact: Suppose $f$ is continuous on $(-\delta,\delta)$ and differentiable on $(-\delta,\delta)\backslash\{0\}$. If $\lim_{h\to 0^-}f'(h)=L=\lim_{h\to 0^+}f'(h)$, then the left and right derivatives of $f$ at $0$ exist and are equal to $L$. Thus $f$ is continuously differentiable at $0$ and the value of the derivative there is $L$.

Now on $(0,\theta_a)$,
\begin{align*}
&y'=-\sqrt{H_a(y)},\\
&y''=\frac 12H'_a(y),\\
&y^{(3)}=\frac 12H_a''(y)\,y'.
\end{align*}
Then on $(-\theta_a,0)$,
\begin{align*}
&y'(\theta)=-y'(-\theta),\\
&y''(\theta)=y''(-\theta),\\
&y^{(3)}(\theta)=-y^{(3)}(-\theta),
\end{align*}
and we have
\begin{align*}
\lim_{\theta\to 0^+}y'(\theta)&=-\lim_{\theta\to 0^+}\sqrt{H_a(y(\theta))}\\
&=-\sqrt{H_a(a)}\\
&=0.
\end{align*}
Thus
\[
\lim_{\theta\to 0^-}y'(\theta)=-\lim_{\theta\to 0^-}y'(-\theta)=-\lim_{\theta\to 0^+}y'(\theta)=0
\]
and so $y$ is continuously differentiable at $0$ with $y'(0)=0$.

Next,
\begin{align*}
\lim_{\theta\to 0^+}y''(\theta)&=\frac 12\,\lim_{\theta\to 0^+}H_a'(y(\theta))\\
&=-p(1+2a),\quad\text{by \eqref{eq5.19},}
\end{align*}
and so
\[
\lim_{\theta\to 0^-}y''(\theta)=\lim_{\theta\to 0^-}y''(-\theta)=\lim_{\theta\to 0^+}y''(\theta)=-p(1+2a).
\]
Thus $y'$ is continuously differentiable at $0$ and $y''(0)=-p(1-2a)$.

Using \eqref{eq5.19}, it is easy to check that $\lim_{\theta\to 0^+}H''(y(\theta))$ exists. Hence we have that
\begin{align*}
\lim_{\theta\to 0^+}y^{(3)}(\theta)&=\frac 12\,\lim_{\theta\to 0^+}H''_a(y(\theta))\,y'(\theta)\\
&=0,
\end{align*}
and consequently,
\[
\lim_{\theta\to 0^-}y^{(3)}(\theta)=-\lim_{\theta\to 0^-}y^{(3)}(-\theta)=-\lim_{\theta\to 0^+}y^{(3)}(\theta)=0.
\]
It follows that $y''$ is continuously differentiable at $0$ and $y^{(3)}(0)=0$.
\end{proof}

\begin{lem}\label{lem5.10}
The root $\theta_a$ is continuous and increasing as a function of $a>0$. Moreover,
\[
\lim_{a\to\infty}\theta_a=\frac\pi 2\left[\,1-\frac 12\sqrt{\frac{2(p-1)}p}\,\right]:=:\eta_p
\]
and the range of $\theta_a$ as a function of $a>0$ is $(0,\eta_p)$.
\end{lem}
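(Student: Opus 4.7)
The plan is to first put $\theta_a$ into a normalized form and then read off monotonicity, continuity, and the two endpoint limits from the properties of $G_a$ and $K$ established in Lemmas \ref{lem5.1}--\ref{lem5.5}. Substituting $w=au$ in \eqref{eq5.20}--\eqref{eq5.21} and using the definition \eqref{eq5.18} of $H_a$ gives
\[
\theta_a=\int_0^a\frac{dw}{\sqrt{H_a(w)}}=\int_0^1\frac{du}{u\sqrt{G_a(u)}}.
\]
Lemma \ref{lem5.2} immediately yields strict monotonicity of $a\mapsto\theta_a$, since the integrand is strictly increasing in $a$ pointwise. For continuity at any fixed $a_0>0$, I would use pointwise continuity of $a\mapsto G_a(u)$ from Lemma \ref{lem5.1} together with dominated convergence; on a compact neighborhood $[a_1,a_2]$ of $a_0$, the bound $G_a(u)\geq G_{a_2}(u)\geq K(u)$ provides the integrable dominant $1/(u\sqrt{K(u)})$ via Corollary \ref{cor5.7}.

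The two limits are short consequences of Lemma \ref{lem5.2}. For $a\to 0^+$, $G_a(u)\to\infty$ pointwise, the integrand tends to $0$, and (restricting to $a\leq 1$) the same integrable dominant gives $\theta_a\to 0$ by dominated convergence. For $a\to\infty$, the convergence $G_a\downarrow K$ is monotone on $(0,1)$, so $1/(u\sqrt{G_a(u)})\uparrow 1/(u\sqrt{K(u)})$ and the monotone convergence theorem yields
\[
\lim_{a\to\infty}\theta_a=\int_0^1\frac{du}{u\sqrt{K(u)}}.
\]

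The crux of the proof is to evaluate this integral and to identify it with $\eta_p$. Using the ODE $K'(u)=-F_\infty(u,K(u))$ from Lemma \ref{lem5.4}, I would change variables from $u$ to $K$ via
\[
\frac{du}{u}=-\frac{4+(p-1)K}{16p+4(3p-2)K+2(p-1)K^2}\,dK,
\]
with the limits $K(1)=0$ and $K(0^+)=\infty$ from Lemma \ref{lem5.6} flipping the orientation of integration. A further substitution $K=s^2$ converts the resulting integral into
\[
\int_0^\infty\frac{4+(p-1)s^2}{(p-1)s^4+2(3p-2)s^2+8p}\,ds.
\]
The hard part comes here: the quartic denominator is quadratic in $s^2$ with discriminant $4(p-2)^2$ and factors cleanly as $((p-1)s^2+2p)(s^2+4)$, after which partial fractions reduce the integrand to a linear combination of $1/(s^2+2p/(p-1))$ and $1/(s^2+4)$. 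Using $\int_0^\infty ds/(s^2+c^2)=\pi/(2c)$, the arithmetic collapses to $\tfrac{\pi}{2}\bigl[\,1-\tfrac12\sqrt{2(p-1)/p}\,\bigr]=\eta_p$. I expect this clean factorization to be the main obstacle; it reflects the particular structure of the $p$-Laplacian under the ansatz $u=r^2f(\theta)$ rather than any generic partial-fractions miracle, and verifying it carefully is what makes the final identification with $\eta_p$ come out on the nose.

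Finally, since $\theta_a$ is continuous and strictly increasing on $(0,\infty)$ with $\theta_a\to 0$ as $a\to 0^+$ and $\theta_a\to\eta_p$ as $a\to\infty$, the intermediate value theorem gives that its range is exactly $(0,\eta_p)$, completing the proof.
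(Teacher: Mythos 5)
Your proposal is correct and follows essentially the same route as the paper: the normalization $\theta_a=\int_0^1\frac{du}{u\sqrt{G_a(u)}}$, monotonicity from Lemma \ref{lem5.2}, continuity via the integrable dominant $u^{-1}K(u)^{-1/2}$ from Corollary \ref{cor5.7}, the two endpoint limits by monotone/dominated convergence, and the change of variables $y=K(u)$, $y=z^2$ using Lemma \ref{lem5.4} and Lemma \ref{lem5.6}. The only divergence is the final evaluation: the paper invokes residue theory, while you factor the denominator as $\bigl((p-1)s^2+2p\bigr)\bigl(s^2+4\bigr)$ and use partial fractions with $\int_0^\infty\frac{ds}{s^2+c^2}=\frac{\pi}{2c}$. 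Your factorization is correct (the discriminant of the quadratic in $s^2$ is indeed $4(p-2)^2$, giving roots $-4$ and $-2p/(p-1)$), the decomposition works out to $\frac{2}{s^2+4}-\frac{1}{s^2+2p/(p-1)}$, and the arithmetic yields $\frac{\pi}{2}\bigl[1-\frac12\sqrt{2(p-1)/p}\bigr]$ exactly; this is arguably cleaner and more self-contained than the residue computation the paper leaves to the reader.
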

\begin{proof}
Observe that
\begin{align}\label{eq5.24}
\theta_a&=\int_0^a\frac{dw}{\sqrt{H_a(w)}}\notag\\
&=\int_0^a\frac{dw}{w\sqrt{G_a(w/a)}}\notag\\
&=\int_0^1\frac{du}{u\sqrt{G_a(u)}}.
\end{align}
Since $G_a(\,\cdot\,)$ is decreasing as a function of $a>0$, $\theta_a$ is increasing for $a>0$. By Lemma \ref{lem5.1}, since $G_a\geq K$ and $y^{-1}(K(y))^{-1/2}$ is integrable, we get that $\theta_a$ is a continuous function of $a>0$. Then by monotonicity,
\begin{align*}
\lim_{a\to\infty}\theta_a&=\lim_{a\to\infty}\int_0^1\frac{du}{u\sqrt{G_a(u)}}\\
&=\int_0^1\frac{du}{u\sqrt{K(u)}}.
\end{align*}
Upon changing variables $y=K(u)$ and using Lemma \ref{lem5.6}b together with the expression for $K'(u)$ from Lemma \ref{lem5.4}, we get that
\[
\lim_{a\to\infty}\theta_a=\int_0^\infty\frac{4+(p-1)y}{y^{1/2}\left[\,16p+4(3p-2)\,y+2(p-1)\,y^2\,\right]}\,dy.
\]
After another change of variables $y=z^2$, this becomes
\[
\lim_{a\to\infty}\theta_a=2\int_0^\infty\frac{4+(p-1)z^2}{2(p-1)z^4+4(3p-2)z^2+16p}\,dz.
\]
The latter integral is easily evaluated using residue theory to yield
\[
\lim_{a\to\infty}\theta_a=\frac\pi 2\left[\,1-\frac 12\sqrt{\frac{2(p-1)}p}\,\right],
\]
as desired.

To see that the range of $\theta_a$ is $(0,\eta_p)$, it is enough to show that
\[
\lim_{a\to 0^+}\theta_a=0.
\]
Since $u^{-1}(G_a(u))^{-1/2}\leq u^{-1}(K(u))^{-1/2}$ and the latter is integrable, by Lemma \ref{lem5.2} and dominated convergence, \eqref{eq5.24} yields
\[
\lim_{a\to 0^+}\theta_a=\lim_{a\to 0^+}\int_0^1\frac{du}{u\sqrt{G_a(u)}}=0.
\]
\end{proof}

At last we can prove Theorem \ref{thm1.2}. Let 
\[
\eta<\pi\left[\,1-\frac 12\sqrt{\frac{2(p-1)}p}\,\right].
\]
By Lemma \ref{lem5.10}, choose $a>0$ such that $\theta_a=\frac\eta 2$. Taking $f(\theta)$ to be the corresponding $y(\theta)$ from Lemma \ref{lem5.9} and setting $u=r^2f(\theta)$ does the trick.

\newpage

\end{document}